\DeclareSymbolFont{cyrletters}{OT2}{wncyr}{m}{n}
\DeclareMathSymbol{\Sha}{\mathalpha}{cyrletters}{"58}
\newcommand{\widebar}[1]{\mkern 1.5mu\overline{\mkern-1.5mu#1\mkern-1.5mu}\mkern 1.5mu}
\newtheorem{theorem}{Theorem}[section]
\newtheorem{lemma}[theorem]{Lemma}
\newtheorem{proposition}[theorem]{Proposition}
\newtheorem*{proposition*}{Proposition}
\newtheorem*{questiona*}{Question A}
\newtheorem*{questionb*}{Question B}
\newtheorem*{theorem*}{Theorem}
\theoremstyle{definition}
\newtheorem{example}[theorem]{Example}
\newtheorem{remark}[theorem]{Remark}
\newtheorem{emptyremark}[theorem]{}
\newtheorem*{acknowledgement}{Acknowledgements}
\theoremstyle{remark}
\title{Reduction types of CM curves}
\author{Mentzelos Melistas}
\address{Charles University, Faculty of Mathematics and Physics, Department of
Algebra, Sokolov\-sk\' a 83, 18600 Praha~8, Czech Republic}
\address{University of Twente, Department of Applied Mathematics, Drienerlolaan 5, 7522 NB Enschede, The Netherlands}
\date{\today}
\begin{document}

\maketitle

\begin{abstract}
    We study the reduction properties of low genus curves whose Jacobian has complex multiplication. In the elliptic curve case, we classify the possible Kodaira types of reduction that can occur. Moreover, we investigate the possible Namikawa Ueno types that can occur for genus $2$ curves whose Jacobian has complex multiplication which is defined over the base field. We also produce bounds on the torsion subgroup of abelian varieties with complex multiplication defined over local fields.
\end{abstract}

\section{Introduction}\label{sectionintroduction}

Let $g\geq 1$ be an integer, let $K$ be a CM field, i.e., $K$ is a totally imaginary quadratic extension of a totally real number field, and assume that $K$ has degree $2g$ over $\mathbb{Q}$. Let $R$ be a complete discrete valuation ring with fraction field $L$ of characteristic $0$ and finite residue field. If $A/L$ is an abelian variety, then we will denote by $\text{End}_L(A)$ the ring of endomorphisms of $A/L$ which are defined over $L$. An abelian variety $A/L$ with complex multiplication by $K$ over the field $L$ is an abelian variety $A/L$ of dimension $g$ together with an embedding $\iota : K \hookrightarrow \text{End}_L^0(A):= \mathbb{Q} \otimes \text{End}_L(A)$. Our definition implies that $A/L$ is isotypic (see \cite[Theorem 1.3.1.1]{chaiconradoort}). Note also that we require that $K$ injects into $\text{End}_L^0(A)$ and not just in $\text{End}^0(A):= \mathbb{Q} \otimes \text{End}_{\widebar{\mathbb{Q}}}(A)$. If $K$ injects into $\text{End}^0(A)$, then we will say that $A/L$ has potential complex multiplication by the field $K$. If $C/L$ is a curve, then we will say that $C/L$ has complex multiplication over $L$ (or is a CM curve over $L$) if the Jacobian $Jac(C)/L$ has complex multiplication over the field $L$.

The study of the reduction properties of abelian varieties with complex multiplication is a classical topic with a rich history. Serre and Tate in \cite{st} proved, as a consequence of the N\'eron Ogg Shafarevich Criterion, that every abelian variety with complex multiplication defined over a complete discrete valuation ring with finite residue field has potentially good reduction. More generally, Oort in \cite{oortgoodandstablereductionofabelianvarieties}, proved the same result under the weaker assumption that the residue field is perfect. Lorenzini in \cite{lorenzini1990}, among other results, proved, in the case where the residue field is algebraically closed, that if $C/L$ is a curve with potentially good reduction with simple Jacobian $Jac(C)/L$ that has complex multiplication over $L$, then the degree of the minimal extension over which $Jac(C)/L$ acquires good reduction has at most three prime divisors. In this paper, we study the possible configurations of the special fiber of the minimal proper regular model of elliptic curves with complex multiplication and of genus $2$ curves with complex multiplication.

Our first result is the following.

\begin{theorem}\label{fullendomorphism}
Let $R$ be a complete discrete valuation ring with valuation $v$, fraction field $L$ of characteristic $0$, and algebraically closed residue field $k_L$ of characteristic $p>0$. Let $E/L$ be an elliptic curve with $j$-invariant $j_E$ that has complex multiplication by an imaginary quadratic field $K$ and let $\mathcal{O}_K$ be the ring of integers of $K$. If $K=\mathbb{Q}(i)$ or $\mathbb{Q}(\sqrt{-3})$, then assume that End$_L(E) \cong \mathcal{O}_K$. Then, depending on $p$, $v(p)$, $j_E$, and $K$, the possible reduction types of $E/L$ are given by the following table \\

    \centering
    \begin{tabular}{|p{1cm}|p{1cm}|p{2cm}|p{4.5cm}|}
    \hline
        $p$ & $v(p)$ & $j_E$ &Possible reduction types \\ \hline 
        $\neq 2$ & any & $\neq 0, 1728$ &  I$_0$ or I$_0^*$ \\ \hline
        $ 2$ & $1$ & $\neq 0, 1728$ & I$_0$, I$_4^*$, I$_8^*$, II, or II$^*$ \\ \hline
        $\neq 2$ & any & $ 1728$ &   I$_0$, III, III$^*$, or I$_0^*$ \\ \hline
        $\neq 3$ & any & $ 0$ &  I$_0$, II, II$^*$, IV, IV$^*$ or I$_0^*$ \\ \hline
    \end{tabular}
\end{theorem}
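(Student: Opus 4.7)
The plan is to combine the potentially good reduction furnished by the Serre--Tate criterion with a tight analysis of the inertia action on the Tate module, exploiting the fact that the embedding $K\hookrightarrow \text{End}_L^0(E)$ is defined over $L$ to force the inertia image into a very small subgroup.

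First I would invoke the theorem of Serre and Tate recalled in the introduction: $E/L$ has potentially good reduction, so its Kodaira type belongs to the finite list I$_0$, I$_n^*$, II, II$^*$, III, III$^*$, IV, IV$^*$. Fix a prime $\ell\neq p$ and let $L'$ denote the smallest extension of $L^{\mathrm{nr}}$ over which $E$ acquires good reduction. The inertia group acts on $T_\ell E$ commuting with the $\mathcal{O}_K$-structure, so its image lies in the centraliser $(K\otimes\mathbb{Q}_\ell)^\times$ of $\mathcal{O}_K$ inside $\mathrm{Aut}(V_\ell E)$; equivalently, after passing to the N\'eron model of $E_{L'}/\mathcal{O}_{L'}$, it embeds into the subgroup of $\mathrm{Aut}_{k_L}(\bar E)$ commuting with the reduction of $\mathcal{O}_K$, which equals $\mathcal{O}_K^\times$. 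Concretely, $\mathcal{O}_K^\times$ is $\{\pm 1\}$ generically, equals $\mu_4$ when $K=\mathbb{Q}(i)$ (so $j_E=1728$), and equals $\mu_6$ when $K=\mathbb{Q}(\sqrt{-3})$ (so $j_E=0$).

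Next I would use the standard dictionary (valid whenever $p\nmid|\mathcal{O}_K^\times|$, so that the extension $L'/L^{\mathrm{nr}}$ is tame) between the order of the inertia image and the Kodaira type: orders $1,2,3,4,6$ correspond respectively to I$_0$, I$_0^*$, (IV or IV$^*$), (III or III$^*$), and (II or II$^*$). Combined with the previous paragraph's constraint on the inertia image, this immediately yields rows one, three, and four (for $p\neq 2$) of the table. For the realisation direction I would exhibit explicit CM curves: an unramified quadratic twist of a good-reduction curve produces I$_0^*$; suitable quartic twists of $y^2=x^3+x$ produce III and III$^*$; and sextic twists of $y^2=x^3+1$ produce IV, IV$^*$, II, II$^*$. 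Each such construction is dictated by the class-field-theoretic description of the minimal extension $L'/L$.

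The main obstacle is row two, where $p=2$ divides $|\mathcal{O}_K^\times|=2$ and so $L'/L^{\mathrm{nr}}$ can be a wildly ramified quadratic extension, producing Kodaira types beyond the tame list. Here the reduction of $E$ coincides with that of a suitable quadratic twist of a good-reduction model, and in residue characteristic $2$ the dependence on the discriminant of the quadratic extension produces exactly I$_4^*$, I$_8^*$, II, and II$^*$ in addition to I$_0$. I would handle this by using $v(p)=1$ (so $L/\mathbb{Q}_2$ is absolutely unramified) to enumerate quadratic extensions of $L^{\mathrm{nr}}$, running Tate's algorithm on the corresponding twists, and comparing with Papadopoulos's tables of Kodaira types compatible with potentially good reduction in residue characteristic $2$; the CM centraliser argument rules out every other type. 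The analogous analysis in row four when $p=2$ proceeds similarly but confirms that wildness adds no new types beyond the six already forced by $\mathcal{O}_K^\times=\mu_6$. Realising each listed Kodaira type by an explicit CM curve then completes the proof.
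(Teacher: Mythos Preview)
Your proposal is essentially correct but follows a genuinely different route from the paper, especially for the $j_E=0,1728$ rows.

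\textbf{Comparison.} For rows 1 and 2 ($j_E\neq 0,1728$) the two arguments converge: both say that CM over $L$ forces the inertia image into $\{\pm 1\}$, so $E$ is a quadratic twist of a good-reduction curve, and then one tabulates quadratic twists (the paper packages this as Lemma~\ref{twistlemma} plus Lemma~\ref{lemmatwists}). For rows 3 and 4, however, the paper does \emph{not} use the inertia/centraliser argument at all. It simply observes that $j_E=1728$ (resp.\ $j_E=0$) forces a Weierstrass model $y^2=x^3+Ax$ (resp.\ $y^2=x^3+B$), reads off a divisibility constraint on $v(\Delta_{\min})$ (namely $3\mid v(\Delta_{\min})$, resp.\ $2\mid v(\Delta_{\min})$), and then for $p\ge 5$ invokes Tate's algorithm while for $p=3$ (resp.\ $p=2$) uses $c_6=0$ (resp.\ $c_4=0$) together with the Papadopoulos tables. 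In particular the paper's argument for rows 3 and 4 uses CM only to pin down $j_E$; after that it is a pure Weierstrass-invariant computation. Your approach via the tame dictionary between $[L':L^{\mathrm{nr}}]$ and Kodaira type is more conceptual and uniform, and has the advantage of making transparent \emph{why} the allowed types match the divisors of $|\mathcal{O}_K^\times|$.

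\textbf{One soft spot.} Your treatment of row 4 at $p=2$ is underspecified. You write that ``the analogous analysis \ldots\ proceeds similarly'' to row 2, but row 2 crucially uses $v(2)=1$ to control the wild quadratic twists, whereas row 4 carries no hypothesis on $v(2)$. With $[L':L^{\mathrm{nr}}]\mid 6$ and $p=2$, the quadratic piece can be arbitrarily wildly ramified, and the tame dictionary no longer applies; you have not said what replaces it. The paper sidesteps this entirely by the $c_4=0$ plus Papadopoulos argument, which is uniform in $v(2)$. If you want to keep your inertia approach, you would need an independent argument here (e.g.\ falling back on $c_4=0$ as the paper does, or a direct sextic-twist computation valid for all $v(2)$).

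\textbf{Minor point.} The paper treats the realisation of each listed type as a separate matter (examples following the proof), not as part of the theorem's proof; the theorem only asserts the upper bound on types.
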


Keeping the same notation as above, if we do not assume that the complex multiplication is defined over $L$, then, as Theorem \ref{potentialcm} below shows, in addition to the Kodaira types of Theorem \ref{fullendomorphism}, a few more Kodaira types can also occur.

\begin{theorem}\label{potentialcm} 
Let $R$ be a complete discrete valuation ring with fraction field $L$ of characteristic $0$ and algebraically closed residue field $k_L$ of characteristic $p>0$. Let $E/L$ be an elliptic curve with potential complex multiplication by an imaginary quadratic field $K$ and denote by $j_E$ the $j$-invariant of $E/L$. If $K=\mathbb{Q}(i)$ or $\mathbb{Q}(\sqrt{-3})$, then assume that End$_{LK}(E) \cong \mathcal{O}_K$. Then, depending on $p$, $v(p)$, $j_E$, and $K$, the possible reduction types of $E/L$ are given by the following table \\

\centering
    \begin{tabular}{|p{1cm}|p{1cm}|p{2cm}|p{4.5cm}|}
    \hline
        $p$ & $v(p)$ & $j_E$ & Possible reduction types \\ \hline
        $\neq 2$ & any & $\neq 0, 1728$ &  I$_0$, III, III$^*$, or I$_0^*$. \\ \hline
        $\neq 2$ & any & $ 1728$ &   I$_0$, III, III$^*$, or I$_0^*$ \\ \hline
         $2$ & $1$ & $ 1728$ &  I$_0$, II, III, III$^*$, I$_2^*$ or I$_3^*$ \\ \hline
        $\neq 3$ & any & $ 0$ &   I$_0$, II, II$^*$, IV, IV$^*$ or I$_0^*$ \\ \hline
 
    \end{tabular}
\end{theorem}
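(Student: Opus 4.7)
The strategy is to reduce to Theorem \ref{fullendomorphism} by base changing $E$ to the field $M := LK$, over which the complex multiplication becomes defined, and then pulling the resulting Kodaira types back through the extension $M/L$. The Galois group $\mathrm{Gal}(\bar L/L)$ acts on $\mathrm{End}^0_{\bar L}(E) \cong K$ through $\mathrm{Gal}(K/\mathbb{Q}) = \{\pm 1\}$, so the fixed field of the kernel is exactly $LK$, and hence $E_M$ acquires CM by $\mathcal{O}_K$ over $M$. Theorem \ref{fullendomorphism} then restricts the possible Kodaira types of $E_M$. The extension $M/L$ has degree at most $2$, and it is ramified over $L$ iff $K$ is ramified at the prime of $L$: this occurs precisely when $K = \mathbb{Q}(i)$ with $p = 2$, or $K = \mathbb{Q}(\sqrt{-3})$ with $p = 3$.

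In rows 1, 2, and 4 of the table, $M/L$ is either trivial, unramified, or tamely ramified quadratic. Using the standard table for Kodaira types under tamely ramified quadratic base change
\[
\mathrm{I}_0 \to \mathrm{I}_0, \quad \mathrm{II} \to \mathrm{IV}, \quad \mathrm{III} \to \mathrm{I}_0^*, \quad \mathrm{IV} \to \mathrm{IV}^*, \quad \mathrm{I}_0^* \to \mathrm{I}_0, \quad \mathrm{IV}^* \to \mathrm{IV}, \quad \mathrm{III}^* \to \mathrm{I}_0^*, \quad \mathrm{II}^* \to \mathrm{IV}^*,
\]
and taking preimages: in row 1 the $M$-types $\{\mathrm{I}_0, \mathrm{I}_0^*\}$ pull back to $\{\mathrm{I}_0, \mathrm{I}_0^*, \mathrm{III}, \mathrm{III}^*\}$, as claimed. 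In row 2 ($p \neq 2$, so $L(i)/L$ is unramified) and row 4 ($p \neq 3$, so $L(\sqrt{-3})/L$ is unramified), the $M$-type list descends verbatim, again matching the corresponding rows of the table.

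The main obstacle is row 3 ($p = 2$, $v(p) = 1$, $j_E = 1728$), where $M = L(i)/L$ is \emph{wildly} ramified and moreover $v_M(2) = 2$ places $E_M$ outside the scope of Theorem \ref{fullendomorphism}. For this row I would argue directly from an explicit Weierstrass model: every such $E/L$ admits an equation of the form $y^2 = x^3 + a x$, and running Tate's algorithm while tracking $v_L(a) \bmod 4$ (the natural invariant up to quartic twist, consistent with the $\mathbb{Z}[i]$-CM acquired over $M$) produces exactly the list $\mathrm{I}_0, \mathrm{II}, \mathrm{III}, \mathrm{III}^*, \mathrm{I}_2^*, \mathrm{I}_3^*$. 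The non-tame types $\mathrm{I}_2^*$ and $\mathrm{I}_3^*$, invisible to any argument based only on tame base change, arise precisely from this wild analysis, which is the technical heart of the proof.
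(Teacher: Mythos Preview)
Your proposal is correct and follows essentially the same route as the paper. The only minor differences are that for row 3 the paper extracts the constraints $c_6 = 0$ and $3 \mid v(\Delta_{\min})$ and reads off the types from Papadopoulos' Tableau~V rather than running Tate's algorithm by hand, and that for rows 2 and 4 your ``unramified'' extension $LK/L$ is in fact trivial (the residue field being algebraically closed), so those rows are literally covered by Theorem~\ref{fullendomorphism}---which is exactly what the paper means when it says ``the proof of the other two cases is exactly the same as in Theorem~\ref{fullendomorphism}.''
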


 In Section \ref{sectioncmellipticcurves} below, we present examples showing that all the reduction types of Theorem \ref{fullendomorphism} and of Theorem \ref{potentialcm} do indeed occur.

Curves of genus $2$ with complex multiplication have received a lot of interest lately, especially due to their cryptographic applications. If an elliptic curve has complex multiplication, then it has potentially good reduction. However, it is not true that every curve of genus $2$ whose Jacobian has complex multiplication has potentially good reduction. For a primitive CM field Goren and Lauter, in \cite{gorenlauter2007}, proved a bound on the primes of geometric bad reduction for curves of genus $2$ whose Jacobian has complex multiplication.

Let $R$ be a complete discrete valuation ring with fraction field $L$ of characteristic $0$ and algebraically closed residue field of characteristic $p>0$. Let $C/L$ be a projective, smooth, and geometrically connected curve of genus $2$ and let $C^{min}/R$ be its minimal proper regular model. There exists a complete classification of the possibilities for the special fiber of $C^{min}/R$ (see \cite{namikawauenoclassification}). In this classification, there are more than 120 possibilities, referred to as reduction types. Moreover, Liu in \cite{Liugenus2algorithm} has produced an algorithm that computes the special fiber of $C^{min}/R$ under the assumption that the extension of minimal degree over which $C/L$ acquires stable reduction is tame, i.e., the degree of this extension is not divisible by $p$. We note that this tameness assumption is automatically satisfied if $p > 5$.

Throughout this article, if $A/L$ is any variety over a field $L$ and $M/L$ is a field extension, then we will denote by $A_M/M$ the base change of $A/L$ to $M$. Among other results in Section \ref{sectiongenus2cm}, we prove the following theorem (for the reduction types we follow Liu's notation in \cite{Liugenus2algorithm}).

\begin{theorem}\label{genus2jacobiangcurvegoodreduction}
Let $R$ be a complete discrete valuation ring with fraction field $L$ of characteristic $0$ and finite residue field $k_L$ of characteristic $p>5$. Let $C/L$ be a projective, smooth, and geometrically connected curve of genus $2$ with simple Jacobian $Jac(C)/L$ that has complex multiplication by a quartic CM field $K$ over the field $L$.  Let $\mu'=|\mu(K)|$, where $\mu(K)$ is the group of roots of unity in $K$, and let $L^{unr}$ be the maximal unramified extension of $L$. 

\begin{enumerate}
    \item Assume that $C/L$ has potentially good reduction. Then the possible special fibers of the minimal proper regular model of $C_{L^{unr}}/L^{unr}$ are given by the following table. 
\begin{table}[!ht]
    \centering
    \begin{tabular}{|p{1.5cm}|p{11.5cm}|}
    \hline
        $\mu'$ & Possible reduction types of $C_{L^{unr}}/L^{unr}$ \\ \hline
        $2$ &   $[I_{0-0-0}]$, $[I_{0-0-0}^*]$ \\ \hline
        $4$ & $[I_{0-0-0}]$, $[I_{0-0-0}^*]$, $[VI]$ \\ \hline
        $6$ & $[I_{0-0-0}]$, $[I_{0-0-0}^*]$, $[III]$, $[IV]$ \\ \hline
        $8$ & $[I_{0-0-0}]$, $[I_{0-0-0}^*]$, $[VI]$, $[VII]$, $[VII^*]$ \\ \hline
        $10$ &   $[I_{0-0-0}]$, $[I_{0-0-0}^*]$, $[IX-1]$, $[IX-2]$, $[IX-3]$, $[IX-4]$, $[VIII-1]$, $[VIII-2]$,$[VIII-3]$, $[VIII-4]$ \\ \hline
        $12$ & $[I_{0-0-0}]$, $[I_{0-0-0}^*]$, $[III]$, $[IV]$, $[VI]$ \\ \hline
    \end{tabular}
\end{table}

   \item Assume that $C/L$ does not have potentially good reduction. Then the possible special fibers of the minimal proper regular model of $C_{L^{unr}}/L^{unr}$ are given by the following table, where $d$ and $r$ are defined as in \cite[Section 4.3]{Liugenus2algorithm}.
\begin{table}[!ht]
    \centering
    \begin{tabular}{|p{1.5cm}| p{11.5cm}|}
    \hline
        $\mu'$ &  Possible reduction types of $C_{L^{unr}}/L^{unr}$ \\ \hline
        $2$ or $10$ &   $[I_0^*-I_0^*-(d-2)/2]$ \\ \hline
        $4$  & $[I_0^*-I_0^*-(d-2)/2]$, $[III-III-(d-2)/4]$, $[III-III^*-(d-4)/4]$, $[III^*-III^*-(d-6)/4]$, $[2I_0^*-(r-1)/2]$ \\ \hline
        $6$ &  $[I_0^*-I_0^*-(d-2)/2]$, $[IV-IV-(d-2)/3]$, $[IV-IV^*-(d-3)/3]$, $[IV^*-IV^*-(d-4)/3]$, $[II-II-(d-2)/6]$, $[II-II^*-(d-6)/6]$, $[II^*-II^*-(d-10)/6]$, $[I_0^*-II-(d-4)/6]$,
    $[I_0^*-II^*-(d-8)/6]$, $[2IV-(r-1)/3$, $[2IV^*-(r-2)/3]$ \\ \hline
        $8$  & $[I_0^*-I_0^*-(d-2)/2]$, $[III-III-(d-2)/4]$, $[III-III^*-(d-4)/4]$, $[III^*-III^*-(d-6)/4]$, $[2I_0^*-(r-1)/2]$, $[2III-(r-1)/4]$, $[2III^*-(r-3)/4]$ \\ \hline
        $12$ & $[I_0^*-I_0^*-(d-2)/2]$, $[III-III-(d-2)/4]$, $[III-III^*-(d-4)/4]$, $[III^*-III^*-(d-6)/4]$, $[2I_0^*-(r-1)/2]$ $[IV-IV-(d-2)/3]$, $[IV-IV^*-(d-3)/3]$, $[IV^*-IV^*-(d-4)/3]$, $[II-II-(d-2)/6]$, $[II-II^*-(d-6)/6]$, $[II^*-II^*-(d-10)/6]$, $[I_0^*-II-(d-4)/6]$,
        $[I_0^*-II^*-(d-8)/6]$,  $[2IV-(r-1)/3$, $[2IV^*-(r-2)/3]$, $[2II-(r-1)/6]$, $[2II^*-(r-5)/6]$ \\ \hline
    \end{tabular}
\end{table}
\end{enumerate}
\end{theorem}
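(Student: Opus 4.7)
The plan is to combine a CM-theoretic structural input with a combinatorial one. Because $Jac(C)$ has CM by $K$ over $L$, the action of the absolute Galois group of $L^{unr}$ on the $\ell$-adic Tate module of $Jac(C)$ commutes with $K$, and by N\'eron--Ogg--Shafarevich together with the standard Serre--Tate analysis of CM abelian varieties it factors through a cyclic quotient that embeds into $\mu(K)$. Hence the minimal extension $M/L^{unr}$ over which $Jac(C)$ acquires good reduction is cyclic of degree $e \mid \mu'$, and since $p > 5$ it is tame. The combinatorial input is Liu's enumeration of Namikawa--Ueno types in \cite{Liugeny2algorithm}, each of which comes equipped with a canonical stable-reduction degree.

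\medskip

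For Part (1), if $C$ has potentially good reduction, the inertia action on $H^1_{\text{\'et}}$ of $C$ coincides with that on the Tate module of $Jac(C)$, so $C$ and $Jac(C)$ acquire good reduction over the same extension $M$, with $e \mid \mu'$. I would then consult Liu's list of types of potentially good reduction and read off the stable extension degrees: $1$ for $[I_{0-0-0}]$, $2$ for $[I_{0-0-0}^*]$, $3$ for $[III]$, $4$ for $[VI]$, $6$ for $[IV]$, $8$ for $[VII]$ and $[VII^*]$, and $5$ or $10$ for $[VIII-i]$, $[IX-i]$. Retaining only those with $e \mid \mu'$ produces exactly the first table.

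\medskip

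For Part (2), suppose $C$ has no potentially good reduction. Since $Jac(C)$ still does, its N\'eron model over $\mathcal{O}_M$ is an abelian scheme, forcing the stable model of $C_M$ to be of compact type; in genus $2$ the only such singular stable curve is a pair of smooth elliptic curves $E_1, E_2$ meeting transversally at a point, with $Jac(C)_{\bar k} \cong E_1 \times E_2$. The $K$-action specializes to $K \hookrightarrow \text{End}^0(E_1 \times E_2)$, and since $K$ is a field of degree $4$ this forces $E_1 \sim E_2$ with common endomorphism algebra either an imaginary quadratic subfield $K_0 \subseteq K$ or a quaternion algebra in the supersingular case. A direct field-theoretic check identifies $K_0$ from $\mu'$: $K_0 = \mathbb{Q}(i)$ iff $4 \mid \mu'$, $K_0 = \mathbb{Q}(\sqrt{-3})$ iff $3 \mid \mu'$, and otherwise $\mu(K_0) = \{\pm 1\}$. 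Lifting the components $E_i$ to elliptic curves $E_i'/L^{unr}$ with $Jac(C) \sim E_1' \times E_2'$, I would apply Theorem \ref{fullendomorphism} to each $E_i'$ to constrain its Kodaira type, and then intersect with Liu's enumeration of the symbols $[\text{type}_1-\text{type}_2-(d-k)/m]$ and $[2\,\text{type}-(r-k)/m]$, each of which corresponds to a specific Kodaira pair on the two elliptic components, to produce the second table.

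\medskip

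The hardest step is the last one: carefully transferring the stable-reduction components $E_1, E_2$ to elliptic curves $E_1', E_2'$ over $L^{unr}$ whose Kodaira types match those tabulated, and checking that the $K$-action descends to CM by $K_0$ on each factor rather than a nondiagonal embedding into $M_2(K_0)$ that mixes them. The case $\mu' = 10$, where $K \supset \mathbb{Q}(\zeta_5)$ but $K_0$ contains no extra roots of unity, is the one that explains the otherwise surprising coincidence of its row with the $\mu' = 2$ row.
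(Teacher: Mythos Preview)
Your Part~(1) sketch has the right skeleton but a real omission. Liu's table of potentially good types in degree $2$ contains not only $[I_{0-0-0}^*]$ but also $[II]$, and in degree $6$ it contains $[V]$ and $[V^*]$ in addition to $[IV]$. You silently drop these three. The paper handles them via a separate lemma: since $Jac(C)$ is simple with CM over $L$, it has purely additive reduction (Oort), so the special fiber of the minimal regular model consists only of rational curves; this kills $[II]$ directly (its special fiber carries a genus-$1$ component), and then a cubic base-change argument inside the degree-$6$ extension forces $[V]$, $[V^*]$ down to $[II]$, a contradiction. Without this step your divisibility filter alone does not give the first table.

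Your Part~(2) strategy is genuinely different from the paper's, and it does not work as stated. You want to lift the two elliptic components of the stable reduction to curves $E_1',E_2'$ over $L^{unr}$ with $Jac(C)\sim E_1'\times E_2'$ and then apply Theorem~\ref{fullendomorphism} to each factor. But $Jac(C)$ is assumed \emph{simple} over $L$, and when the quartic CM field $K$ is primitive it is absolutely simple; in particular for $\mu'=10$ one has $K=\mathbb{Q}(\zeta_5)$, whose unique quadratic subfield is the real field $\mathbb{Q}(\sqrt{5})$, so there is no imaginary quadratic $K_0\subset K$ at all and no isogeny decomposition over any extension of $L$. Your ``direct field-theoretic check'' producing a $K_0$ therefore fails precisely in the case you single out, and the lifting step collapses. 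The product structure $E_1\times E_2$ exists only on the special fiber in characteristic $p$, where the endomorphism algebra has jumped; it does not lift. The paper instead stays entirely on the combinatorial side: it reads Liu's Tables~3.1 and~3.2 for the relevant degrees, and then excludes every type whose symbol contains an $I_0$ factor (again via the purely-additive argument, since an $I_0$ component would contribute positive abelian rank), together with several further types eliminated by carefully chosen intermediate base changes that force a forbidden $I_0$ or $I_0^*$ pairing. That mechanism, not an elliptic-curve lifting, is what produces the second table.
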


When $\mu'=8$ or $10$, assuming that $C/L$ has potentially good reduction and the special fiber of the stable model is not isomorphic to either the curve $C_0$ or the curve $C_1$ of \ref{c0c1}, we obtain, in Section \ref{sectiongenus2cm}, a more precise list of possible reduction types (see Theorem \ref{theorem4.5} below).

In the last section, we focus on component groups and torsion points of CM abelian varieties. Using ideas of Clark and Xarles from \cite{cx} we prove the following proposition.

\begin{proposition}\label{propositioncmandtorsion}
Let $R$ be a complete discrete valuation ring with fraction field $L$ of characteristic $0$ and finite residue field $k_L$ which has characteristic $p$ and cardinality $q$. Denote by $e$ the absolute ramification index of $L$. Let $A/L$ be an abelian variety with complex multiplication by a CM field $K$ over $L$. Then 
$$|A(L)_{tors}| \leq \text{max} \{ |\mu(K)| \cdot p^{2g \gamma_p(e|\mu(K)|)} ,  \lfloor (1+\sqrt{q})^2 \rfloor^g  \cdot p^{2g \gamma_p(e)} \},$$ where $\gamma_p(m)=\lfloor \log_p(\frac{pm}{p-1}) \rfloor$.
\end{proposition}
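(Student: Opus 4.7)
The plan is to follow Clark and Xarles \cite{cx}, decomposing $A(L)_{\text{tors}}$ into its prime-to-$p$ and $p$-primary components and bounding each. The two terms in the maximum correspond to the cases where $A/L$ has good reduction, respectively bad (but potentially good) reduction. The key CM input is that for any prime $\ell$, the $\mathcal{O}_K$-action makes $T_\ell(A)$ a free module of rank one over $\mathcal{O}_K \otimes \mathbb{Z}_\ell$, so the Galois representation $G_L \to \operatorname{Aut}_{\mathbb{Z}_\ell}(T_\ell(A))$ factors through $(\mathcal{O}_K \otimes \mathbb{Z}_\ell)^\times$. By Serre--Tate, $A$ has potentially good reduction; let $L'/L$ be the minimal extension over which $A_{L'}$ has good reduction. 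The image of inertia $I_L$ is finite and commutes with $\mathcal{O}_K$, and one checks that its order divides $|\mu(K)|$, so the ramification index of $L'/L$ is at most $e\cdot|\mu(K)|$.

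In the good reduction case, the exact sequence $0 \to \widehat{\mathcal{A}}(R) \to A(L) \to A(k_L) \to 0$ for the N\'eron model $\mathcal{A}/R$ shows that the prime-to-$p$ torsion injects into $A(k_L)$, bounded by the Weil estimate $|A(k_L)| \leq \lfloor (1+\sqrt{q})^2 \rfloor^g$, while the $p$-primary torsion sits in $\widehat{\mathcal{A}}(R)_{\text{tors}}$ and by the formal group bound of \cite{cx} has size at most $p^{2g\gamma_p(e)}$. Multiplying gives the bound $B_2 = \lfloor (1+\sqrt{q})^2 \rfloor^g \cdot p^{2g\gamma_p(e)}$. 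In the bad reduction case, one passes to $L'$ and applies: (i) the formal group bound over $L'$, whose ramification index is at most $e|\mu(K)|$, yielding the $p$-primary factor $p^{2g\gamma_p(e|\mu(K)|)}$; and (ii) a refinement of the prime-to-$p$ bound that, combined with the component group structure of the N\'eron model of a CM abelian variety, forces the prime-to-$p$ torsion of $A(L)$ to be at most $|\mu(K)|$. Together these give the bound $B_1 = |\mu(K)| \cdot p^{2g\gamma_p(e|\mu(K)|)}$, and taking the maximum yields the claim.

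The main obstacle is tightening the prime-to-$p$ torsion bound in the bad reduction case: a direct Weil estimate applied to $A_{L'}$ over its residue field gives only $\lfloor (1+\sqrt{q'})^2 \rfloor^g$, where $q'$ can be substantially larger than $q$, which is far weaker than $|\mu(K)|$. Sharpening it requires exploiting the CM structure directly on the Tate module, showing that the inertia-invariants in $T_\ell(A)$ vanish for $\ell$ coprime to the order of the inertial image and that the cokernel of $\sigma-1$ on $T_\ell(A)$ (for $\sigma$ generating that image) is controlled in terms of $|\mu(K)|$; the remaining formal-group and Weil-bound inputs are standard.
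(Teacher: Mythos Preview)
Your overall strategy matches the paper's: split into good and bad reduction, and in each case combine a prime-to-$p$ bound with the formal-group $p$-bound from Clark--Xarles. For good reduction the paper simply cites \cite[Part~(ii) Main Theorem]{cx}, exactly as you do. For bad reduction the paper's route differs from your Tate-module proposal: it first invokes Theorem~\ref{cmpotentiallygoodreduction} to get \emph{purely additive} reduction, so that the prime-to-$p$ torsion of $A(L)$ injects into the component group $\Phi$; it then applies Proposition~\ref{componentgroups}(ii), which combines Lemma~\ref{lemmarootsofunity} (the degree $[M:L^{\mathrm{unr}}]$ divides $|\mu(K)|$) with the Edixhoven--Liu--Lorenzini theorem that $[M:L^{\mathrm{unr}}]$ kills $\Phi(\overline{k_L})$, and substitutes this for \cite[Corollary~3.8]{cx} inside the proof of their Part~(iii).

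That said, your ``main obstacle'' paragraph correctly locates a genuine gap---one that is present in the paper's argument as well. Proposition~\ref{componentgroups}(ii) only shows that $\Phi(\overline{k_L})$ is \emph{killed by} $|\mu(K)|$; this bounds its exponent, not its order, so it does not by itself give a prime-to-$p$ bound of $|\mu(K)|$ as required for Proposition~\ref{torsiononcmbadreduction}. In fact that bound is false: take $K=\mathbb{Q}(\sqrt{-7})$, so $|\mu(K)|=2$, and an elliptic curve $E/K$ with $\mathrm{End}_K(E)=\mathcal{O}_K$. Since $2$ splits in $\mathcal{O}_K$, the Galois action on $E[2]$ factors through $(\mathcal{O}_K/2)^\times=\{1\}$, whence $E[2]\subset E(K)$; a ramified quadratic twist at a prime above~$5$ still has CM by $K$, has reduction type $\mathrm{I}_0^*$, and retains full $2$-torsion over the completion, giving prime-to-$p$ torsion of order $4>2=|\mu(K)|$ while $e=1$ and $\gamma_5(2)=0$. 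Your cokernel computation hits the same wall, since $|T_2/(\zeta_2-1)T_2|=|\mathcal{O}_K/2\mathcal{O}_K|=4$. Both the component-group and the Tate-module route do yield the weaker bound $|\mu(K)|^{2g}$ for the prime-to-$p$ part (the torsion is killed by $|\mu(K)|$ and sits inside $A[|\mu(K)|]\cong(\mathbb{Z}/|\mu(K)|\mathbb{Z})^{2g}$); with that correction Proposition~\ref{torsiononcmbadreduction} goes through, and the statement of Proposition~\ref{propositioncmandtorsion} may still survive as written thanks to the $\max$ with the good-reduction term, but the proof via Proposition~\ref{torsiononcmbadreduction} as stated does not establish it.
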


This article is organized as follows. In section \ref{sectioncmellipticcurves} we consider the elliptic curve case and we prove Theorem \ref{fullendomorphism} and
Theorem \ref{potentialcm}. Section \ref{cmabelianvarieties} mostly contains background material on reduction of abelian varieties used in the last two sections. After briefly recalling some basic background on reduction of genus $2$ curves, we prove Theorem \ref{genus2jacobiangcurvegoodreduction} in Section \ref{sectiongenus2cm}. Finally, in Section \ref{sectioncomponentgroups} we study the  possible geometric component groups of abelian varieties and we prove Proposition \ref{propositioncmandtorsion}.

\begin{acknowledgement}
The author would like to thank Pete L. Clark for suggesting the study of the properties of bad reduction of elliptic curves with complex multiplication. I would like to thank Dino J. Lorenzini for some very helpful email correspondence and some insightful comments on an earlier version of this manuscript. I would also like to thank an anonymous referee for many valuable comments and suggestions that improved my manuscript. The author was supported by Czech Science Foundation (GA\v CR) grant 21-00420M and by Charles University Research Center program No.UNCE/SCI/022.
\end{acknowledgement}

\section{Kodaira types of CM elliptic curves}\label{sectioncmellipticcurves}

In this section we prove Theorems \ref{fullendomorphism} and \ref{potentialcm}, and we also present a few examples. We first begin with the proof of Theorem \ref{fullendomorphism} as it will be needed in the proof of Theorem \ref{potentialcm}.

Our starting point for our proofs is Lemma \ref{twistlemma} below, whose proof can be found in \cite[Lemma 2.4]{conradgzrevisited}.  We note that when $E/L$ is an elliptic curve defined over a number field, then Lemma \ref{twistlemma} was originally due to Serre and Tate \cite[Page 507]{st} (see also \cite[Corollary 5.22]{rubincmnotes}).

\begin{lemma}\label{twistlemma}
Let $R$ be a complete discrete valuation ring with fraction field $L$ of characteristic $0$ and algebraically closed residue field $k_L$. Let $E/L$ be an elliptic curve with complex multiplication by an imaginary quadratic field $K \subset L$. If $K=\mathbb{Q}(i)$ or $\mathbb{Q}(\sqrt{-3})$, then assume that End$_L(E) \cong \mathcal{O}_K$, where $\mathcal{O}_K$ is the ring of integers of $K$. Then there exists an elliptic curve $E'/L$ such that $E'/L$ has good reduction and the curves $E/L$ and $E'/L$ become isomorphic over an algebraic closure of $L$.
\end{lemma}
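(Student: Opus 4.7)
The plan is to combine the Serre--Tate potential good reduction theorem with Galois descent, identifying the ``bad twist'' of $E$ with a cocycle valued in the roots of unity $\mu(K)\subset\mathcal O_K^\times$, which is Galois-trivial because $K\subset L$, and therefore untwistable without leaving $L$.

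First, by Serre--Tate, $E/L$ acquires good reduction over some finite Galois extension; take $M/L$ minimal with this property and set $G := \mathrm{Gal}(M/L)$. Let $\mathcal E_M/\mathcal O_M$ be the N\'eron model of $E_M$, with special fiber $\bar E$, an elliptic curve over $k_L$ (using that $k_M=k_L$ because $k_L$ is algebraically closed). The descent datum $\rho$ of $E_M$ as the base change of $E/L$ gives a $G$-action on $\mathcal E_M$ by the N\'eron mapping property, which reduces to a representation $\bar\rho : G \to \mathrm{Aut}_{k_L}(\bar E, 0)$. Since $K\subset L$, the image of $\mathcal O_K$ in $\mathrm{End}(\bar E)$ is pointwise $G$-fixed, so $\bar\rho$ factors through the centralizer of this image in $\mathrm{Aut}(\bar E)$.

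Next I would identify this centralizer as a subgroup of $\mu(K)$. If $\bar E$ is ordinary, then $\mathrm{End}^0(\bar E)=K$ and $\mathrm{Aut}(\bar E)\subset \mu(K)$ already. If $\bar E$ is supersingular, then $\mathrm{End}^0(\bar E)$ is the quaternion algebra ramified at $p$ and $\infty$, in which the imaginary quadratic field $K$ is self-centralizing, so the centralizer of $\mathcal O_K$ in $\mathrm{End}(\bar E)^\times$ sits in the unit group of an order of $K$, hence in $\mu(K)$. Since the reduction map on endomorphisms is injective and $\mu(K) \subset \mathcal O_K^\times \subset \mathrm{Aut}_L(E) \subset \mathrm{Aut}_M(E_M)$ reduces isomorphically onto the same subgroup $\mu(K)$ in $\mathrm{Aut}(\bar E)$, the representation $\bar\rho$ lifts canonically to a homomorphism $\chi : G \to \mu(K) \subset \mathrm{Aut}_M(E_M)$.

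Because $\mu(K)\subset L$, the $G$-action on $\mu(K)$ is trivial, so $\sigma\mapsto \chi(\sigma)^{-1}$ is a $1$-cocycle and I can form the twist $E'/L$ of $E$ by it: concretely, $E'$ is the $L$-form of $E_M$ whose descent datum is $\sigma\mapsto\chi(\sigma)^{-1}\circ\rho(\sigma)$. By construction, $E'$ becomes isomorphic to $E$ over $M$ (and hence over any algebraic closure of $L$), and the modified descent datum reduces to the identity on $\bar E$; therefore $\mathcal E_M$ descends under the twisted $G$-action to a smooth $R$-model of $E'$, which is to say $E'/L$ has good reduction. The main obstacle is the centralizer computation together with the lift to $\mathrm{Aut}_M(E_M)$: the supersingular case requires that a quadratic subfield is self-centralizing in the relevant quaternion algebra, and the hypothesis $\mathrm{End}_L(E)\cong\mathcal O_K$ in the cases $K=\mathbb Q(i), \mathbb Q(\sqrt{-3})$ is precisely what guarantees that the full $\mu(K)$, not merely $\{\pm 1\}$, is realized by $L$-rational automorphisms and so is available for the descent step.
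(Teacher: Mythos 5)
The paper does not prove this lemma itself but cites \cite[Lemma 2.4]{conradgzrevisited} (going back to Serre--Tate), and your argument is essentially that same standard one: the Galois/inertia action on the good reduction commutes with the CM order, hence lands in the unit group of an order of $K$, i.e.\ in $\mu(K)$, which under the stated hypotheses consists of $L$-rational automorphisms of $E$, so one can untwist by the resulting character. The proof is correct; the one step I would rephrase is the last, since ``$\mathcal{E}_M$ descends under the twisted $G$-action to a smooth $R$-model'' is not literal Galois descent when $M/L$ is (totally) ramified --- instead, observe that the twisted descent datum acts trivially on $T_\ell(E'_M)=T_\ell(\bar E)$, so inertia acts trivially on $T_\ell(E')$ and $E'/L$ has good reduction by the N\'eron--Ogg--Shafarevich criterion.
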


We also record here the following useful lemma, which gives the reduction type of a quadratic twist of an elliptic curve with good reduction.

\begin{lemma}\label{lemmatwists}
    Let $R$ be a complete discrete valuation ring with valuation $v$, fraction field $L$ of characteristic $0$, and algebraically closed residue field $k_L$ of characteristic $p>0$. Let $E/L$ be an elliptic curve good reduction and let $E'/L$ be a quadratic twist of $E/L$.
    \begin{enumerate}
        \item If $p>2$, then $E'/L$ has either good reduction or reduction of type I$_0^*$.
        \item If $p=2$ and $v(2)=1$, then $E'/L$ has either good reduction, or reduction of type I$_8^*$, I$_4^*$, II, or II$^*$.
    \end{enumerate}
\end{lemma}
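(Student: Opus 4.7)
The general strategy is to parameterize quadratic twists of $E$ by classes $d \in L^{\times}/(L^{\times})^2$ and to use the transformation formulas
\[
c_4(E_d) = d^2 c_4(E), \qquad c_6(E_d) = d^3 c_6(E), \qquad \Delta(E_d) = d^6 \Delta(E)
\]
obtained from the short Weierstrass form of $E$ (available because $\text{char}(L)=0$). Because $k_L$ is algebraically closed, every nontrivial quadratic extension of $L$ is totally ramified; hence $E_d$ (for $d$ not a square) acquires good reduction only over a ramified quadratic extension and in particular has potentially good additive reduction.

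For part (i), with $p > 2$, Hensel's lemma applied to $X^2 - u$ shows that every unit of $R$ is a square, so $L^{\times}/(L^{\times})^2 = \{1, \pi\}$ and up to isomorphism there is a single nontrivial twist, namely $E_\pi$. Starting from a short Weierstrass model $y^2 = x^3 + Ax + B$ of $E$ with $A, B \in R$ and $\Delta(E) \in R^{\times}$, the twist has the model $y^2 = x^3 + A\pi^2 x + B\pi^3$, which is minimal with $v(\Delta) = 6$. Running Tate's algorithm, one reaches the cubic test for I$_0^*$: the cubic $P(T) = T^3 + \overline{A}\,T + \overline{B}$ has discriminant a unit (up to a sign this equals $\overline{\Delta(E)/(-16)}$ for $p \ne 3$ and $-\overline{A}^3$ for $p = 3$, with $\overline{A}\neq 0$ because $\Delta(E)\in R^\times$ forces $A\in R^\times$ at $p=3$), and is hence separable. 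The reduction type is therefore I$_0^*$.

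For part (ii), with $p = 2$ and $v(2)=1$, the group $L^{\times}/(L^{\times})^2$ is infinite, but every nontrivial quadratic character $\chi_d$ of $G_L$ has Artin conductor exponent in $\{2,3\}$. This follows from a direct computation of the discriminant of $L(\sqrt{d})/L$: representatives of non-square classes may be taken of the form $d = 1 + 2t$ with $\bar t \in k_L^{\times}$, giving conductor $2$ (a uniformizer of $L(\sqrt{d})$ being $\sqrt{d}-1$), or $d = 2u$ with $u \in R^{\times}$, giving conductor $3$. Since $E$ has good reduction, the Galois representation $\rho_{E_d} = \rho_E \otimes \chi_d$ is acted on by inertia via $\chi_d$ alone, and additivity of Artin conductors yields $f(E_d) = 2 f(\chi_d) \in \{4, 6\}$. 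I would then run Tate's algorithm at $p = 2$ on an integral Weierstrass model of $E_d$, case by case in the two conductor classes, and check that the Kodaira types that actually arise are II or I$_4^*$ when $f(E_d) = 4$, and II$^*$ or I$_8^*$ when $f(E_d) = 6$.

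The main obstacle is the explicit case analysis in (ii): Tate's algorithm at $p = 2$ is elaborate, and one must track the minimization of the twisted model together with the valuations of $c_4$, $c_6$, and $\Delta$. In particular one must rule out intermediate additive types (such as III, III$^*$, IV, IV$^*$, or I$_n^*$ for $n \notin \{0,4,8\}$) that could a priori occur with the allowed conductor but do not actually arise from a quadratic twist of a good-reduction curve.
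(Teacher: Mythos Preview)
Your argument for part (i) is correct and is essentially the standard argument the paper has in mind when it cites \cite{com}.

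For part (ii) your route via Artin conductors is genuinely different from the paper's proof, which simply invokes explicit twist formulas from Connell's \emph{Elliptic Curve Handbook} together with Papadopoulos's tables (alternatively, a theorem of Lorenzini on how Kodaira types change under quadratic twist). Your conductor computation is correct: nontrivial quadratic characters of $L$ have conductor exponent $2$ or $3$, so $f(E_d)\in\{4,6\}$. In fact your approach can be completed without the elaborate Tate-algorithm case analysis you anticipate. Since $\Delta(E_d)$ differs from $d^{6}\Delta(E)$ by a $12$th power and $v(\Delta(E))=0$, one has $v(\Delta_{\min}(E_d))\equiv 6v(d)\pmod{12}$; using the integral short model of the twist one checks $v(\Delta_{\min}(E_d))\in\{12\}$ for $v(d)=0$ and $\in\{6,18\}$ for $v(d)=1$. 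Ogg's formula $m=v(\Delta_{\min})+1-f$ then gives $m\in\{9,1,13\}$, and since the reduction is additive this forces the type to lie in $\{\mathrm{II}^{*},\mathrm{I}_4^{*}\}$, $\{\mathrm{II}\}$, $\{\mathrm{I}_8^{*}\}$ respectively. No further case analysis is needed.

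There is, however, a slip in your asserted pairing: you have $\mathrm{II}$ and $\mathrm{II}^{*}$ interchanged. The unit-twist case ($f=4$) has $v(\Delta_{\min})=12$ and hence $m=9$, yielding type $\mathrm{II}^{*}$ or $\mathrm{I}_4^{*}$; the case $v(d)=1$ ($f=6$) yields $\mathrm{II}$ (when $v(\Delta_{\min})=6$) or $\mathrm{I}_8^{*}$ (when $v(\Delta_{\min})=18$). The paper's own Examples~\ref{exampleiiifullendomorphism1} and~\ref{exampleiiifullendomorphism2} exhibit exactly this: the curves of types $\mathrm{I}_4^{*}$ and $\mathrm{II}^{*}$ have conductor exponent $4$, while those of types $\mathrm{I}_8^{*}$ and $\mathrm{II}$ have conductor exponent $6$.
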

\begin{proof}
    Part $(i)$ is well known (see e.g. \cite[Proposition 1]{com}). Part $(ii)$ follows by combining explicit formulas for quadratic twists of elliptic curves (see \cite[Proposition 5.7.1]{ellipticcurvehandbook}) along with \cite[Tableau IV]{pap}. Alternatively, one can use \cite[Theorem 4.2]{lorenzini2013} together with \cite{haiyang}.
\end{proof}

\begin{proof}[Proof of Theorem \ref{fullendomorphism}]
Assume that $p\neq 2$ and that $j_E \neq 0, 1728$. Lemma \ref{twistlemma} tells us that there exists an elliptic curve $E'/L$ with good reduction which becomes isomorphic to $E/L$ over an algebraic closure of $L$. Since $j_E \neq 0, 1728$, we see that $E'/L$ is a quadratic twist of $E/L$ (see \cite[Section X.5]{aec}). Therefore, since $p \neq 2$, using Part $(i)$ of Lemma \ref{lemmatwists}, we find that $E/L$ has good reduction or reduction of type I$_0^*$. 

Assume that $p = 2$, $v(2)=1$, and that $j_E \neq 0, 1728$. Proceeding in the same way as in the previous paragraph we see that there exists a quadratic twist $E'/L$ of $E/L$ which has good reduction. Since $v(2)=1$, by Part $(ii)$ of Lemma \ref{lemmatwists} we obtain that $E/L$ has good reduction or reduction of type I$_8^*$, I$_4^*$, II, or II$^*$. 

Assume that $p \neq 2$ and that $j_E=1728$. Using \cite[Proposition III.1.4]{aec} and \cite[Proposition X.5.4]{aec} we find that the elliptic curve $E/L$ has a short Weierstrass equation of the form $$y^2=x^3+Ax,$$ for some $A \in L^*$. The discriminant of this Weierstrass equation is $\Delta=-64A^3$. Since $p \neq 2$ by assumption, we have that $v(\Delta)=3v(A)$ and, hence, $3$ divides  $v(\Delta)$. Let $\Delta_{min}$ be the discriminant of a minimal Weierstrass equation for $E/L$. Since when a change of Weierstrass equation is performed the valuation of the discriminant changes by a multiple of $12$, we find that $v(\Delta)-v(\Delta_{min})$ is a multiple of $12$. Since $3$ divides $v(\Delta)$, we obtain that $3$ divides $v(\Delta_{min})$. 
 
 Case 1: $p\geq 5$. Using Tate's algorithm \cite{tatealgorithm} (see also \cite[Page 365]{silverman2}) and using the fact that $E/L$ has potentially good reduction, we find that $E/L$ can only have good reduction or reduction of type I$_0^*$, III, or III$^*$. Case 2: $p=3$.  Choose a minimal Weierstrass equation for $E/L$ and let $c_4$, $c_6$, and $\Delta_{min}$ be the $c_4$-invariant, the $c_6$-invariant, and the discriminant of this equation, respectively. Since $j_E=1728$, $j_E=\frac{c_4^3}{\Delta_{min}}$, and $1728\Delta_{min}=c_4^3-c_6^2$ (see \cite[Page 42]{aec}), we find that $c_6=0$. Therefore, using \cite[Tableau III]{pap} we find that $E/L$ can only have good reduction or reduction of type I$_0^*$, III, or III$^*$.

 Assume that $j_E=0$. Using \cite[Proposition III.1.4]{aec} and \cite[Proposition X.5.4]{aec} we find that the curve $E/L$ has a short Weierstrass equation of the form $$y^2=x^3+B,$$ for some $B \in L^*$. The discriminant of this Weierstrass equation is $\Delta= -432B^2$.  Since $p \neq 3$ by assumption, we have that $v(\Delta)=4v(2)+2v(B)$ and, hence, $2$ divides  $v(\Delta)$.  Let $\Delta_{min}$ be the discriminant of a minimal Weierstrass equation for $E/L$. Since when a change of Weierstrass equation is performed the valuation of the discriminant changes by a multiple of $12$, we find that $v(\Delta)-v(\Delta_{min})$ is a multiple of $12$. Since $2$ divides $v(\Delta)$, we obtain that $2$ divides $v(\Delta_{min})$. 

Case 1: $p \geq 5$. Using Tate's algorithm \cite{tatealgorithm}, we obtain that $E/L$ cannot have reduction type III or III$^*$. Therefore, it can only have good reduction or reduction of type II, II$^*$, IV, IV$^*$, or I$_0^*$. Case 2: $p =2$. Choose a minimal Weierstrass equation for $E/L$ and let $c_4$ and $\Delta_{min}$ be the $c_4$-invariant and the discriminant of this equation, respectively. Since $j_E=0$ and $j_E=\frac{c_4^3}{\Delta_{min}}$, we find that $c_4=0$. Therefore, using \cite[Tableau V]{pap} we find that $E/L$ can only have good reduction or reduction of type II, II$^*$, IV, IV$^*$, or I$_0^*$. This completes the proof of our theorem.

\end{proof}

We now present examples showing that all the possible reduction types of Theorem \ref{fullendomorphism} do indeed occur.

\begin{example}
Consider the elliptic curve $E/\mathbb{Q}(\sqrt{-11})$ given by the following Weierstrass equation $$y^2+y=x^3+ax+(a-3)x-2,$$ where $a=\frac{1+ \sqrt{-11}}{2}$. This is the curve with LMFDB \cite{lmfdb} label \href{https://www.lmfdb.org/EllipticCurve/2.0.11.1/9.1/CMa/1}{2.0.11.1-9.1-CMa1}, which has complex multiplication over $\mathbb{Q}(\sqrt{-11})$, $j$-invariant equal to $-32768$, and only one prime of additive reduction with Kodaira type I$_0^*$.
\end{example}

\begin{example}\label{exampleiiifullendomorphism1}
Consider the curve $E_1/\mathbb{Q}(\sqrt{-7})$ given by the following Weierstrass equation $$y^2+axy=x^3+(-a-1)x+1,$$ where $a=\frac{1+\sqrt{-7}}{2}$. Note that the prime $(a)$ of $\mathbb{Q}(\sqrt{-7})$ lies above $(2)$.  The curve $E_1/\mathbb{Q}(\sqrt{-7})$ has LMFDB \cite{lmfdb} label \href{https://www.lmfdb.org/EllipticCurve/2.0.7.1/16.1/CMa/1}{2.0.7.1-16.1-CMa1}, $j$-invariant equal to $-3375$, and additive reduction of Kodaira type I$_4^*$ at $(a)$.

Consider the curve $E_2/\mathbb{Q}(\sqrt{-7})$ given by the following Weierstrass equation $$y^2+axy+ay=x^3+(-a-1)x^2+(2a+2)x-2a+3.$$  The curve $E_2/\mathbb{Q}(\sqrt{-7})$ has LMFDB \cite{lmfdb} label \href{https://www.lmfdb.org/EllipticCurve/2.0.7.1/64.1/CMa/1}{2.0.7.1-64.1-CMa1}, $j$-invariant equal to $-3375$, and additive reduction of Kodaira type I$_8^*$ at $(a)$.

\end{example}

\begin{example}\label{exampleiiifullendomorphism2}
Consider the curve $E_1/\mathbb{Q}(\sqrt{-11})$ given by the following Weierstrass equation $$y^2=x^3+(a+1)x^2+(a+2)x+1,$$ where $a=\frac{1+\sqrt{-11}}{2}$.  The curve $E_1/\mathbb{Q}(\sqrt{-11})$ has LMFDB \cite{lmfdb} label \href{https://www.lmfdb.org/EllipticCurve/2.0.11.1/4096.1/CMb/1}{2.0.11.1-4096.1-CMb1}, $j$-invariant equal to $-32768$, and additive reduction of Kodaira type II at $(2)$.

Consider the curve $E_2/\mathbb{Q}(\sqrt{-11})$ given by the following Weierstrass equation $$y^2=x^3+(a+1)x^2+(a+10)x+12a-1.$$  The curve $E_2/\mathbb{Q}(\sqrt{-11})$ has LMFDB \cite{lmfdb} label \href{https://www.lmfdb.org/EllipticCurve/2.0.11.1/256.1/CMb/1}{2.0.11.1-256.1-CMb1}, $j$-invariant equal to $-32768$, and additive reduction of Kodaira type II$^*$ at $(2)$.
\end{example}

\begin{example}\label{examplesjequalto1728}
    Consider the elliptic curves $E_1/\mathbb{Q}(i)$, $E_2/\mathbb{Q}(i)$, $E_3/\mathbb{Q}(i)$ given by LMFDB \cite{lmfdb} labels \href{https://www.lmfdb.org/EllipticCurve/2.0.4.1/2025.1/CMa/1}{2.0.4.1-2025.1-CMa1}, \href{https://www.lmfdb.org/EllipticCurve/2.0.4.1/2025.1/CMb/1}{2.0.4.1-2025.1-CMb1}, and	\href{https://www.lmfdb.org/EllipticCurve/2.0.4.1/2025.1/CMc/1}{2.0.4.1-2025.1-CMc1}, respectively. All these elliptic curves have complex multiplication by $\mathbb{Q}(i)$ (and $j$-invariant equal to $1728$). Moreover, they have bad reduction at $3$ of Kodaira type III, I$_0^*$, and III$^*$, respectively. 
\end{example}

\begin{example}\label{examplesjequalto0}
   Consider the elliptic curves $E_1/\mathbb{Q}(\sqrt{-3})$, $E_2/\mathbb{Q}(\sqrt{-3})$, $E_3/\mathbb{Q}(\sqrt{-3})$, $E_4/\mathbb{Q}(\sqrt{-3})$ given by LMFDB \cite{lmfdb} labels \href{https://www.lmfdb.org/EllipticCurve/2.0.3.1/256.1/CMa/1}{2.0.3.1-256.1-CMa1}, \href{https://www.lmfdb.org/EllipticCurve/2.0.3.1/784.1/CMa/1}{2.0.3.1-784.1-CMa1}, \href{https://www.lmfdb.org/EllipticCurve/2.0.3.1/784.3/CMb/1}{2.0.3.1-784.3-CMb1}, and \href{https://www.lmfdb.org/EllipticCurve/2.0.3.1/4096.1/CMb/1}{2.0.3.1-4096.1-CMb1}, respectively. All these elliptic curves have complex multiplication by $\mathbb{Q}(\sqrt{-3})$ (and $j$-invariant equal to $0$). Moreover, they have bad reduction at $2$ of Kodaira type II, IV, IV$^*$, and I$_0^*$ respectively. 
   
   We can also find an elliptic curve $E_5/\mathbb{Q}(\sqrt{-3})$ complex multiplication by $\mathbb{Q}(\sqrt{-3})$ having bad reduction at $2$ of Kodaira type II$^*$ as follows. Start with an elliptic curve $E/\mathbb{Q}$ that has potential complex multiplication by $\mathbb{Q}(\sqrt{-3})$ and bad reduction at $2$ of Kodaira type II$^*$ (for example the curve with LMFDB label \href{https://www.lmfdb.org/EllipticCurve/Q/1728/m/1}{1728.m1}). Then consider the base change of $E/\mathbb{Q}$ to $\mathbb{Q}(\sqrt{-3})$ which has complex multiplication by $\mathbb{Q}(\sqrt{-3})$. Since $2$ is unramified in $\mathbb{Q}(\sqrt{-3})$, the base change will still have bad reduction at $2$ of Kodaira type II$^*$.
\end{example}

\begin{example}
     Consider the elliptic curve $E/\mathbb{Q}(\sqrt{-3})$ given by the following Weierstrass equation $$y^2+y=x^3-30x+63.$$ This is the curve with LMFDB \cite{lmfdb} label \href{https://www.lmfdb.org/EllipticCurve/2.0.3.1/81.1/CMa/2}{2.0.3.1-81.1-CMa2} and we have that End$_L(E) \cong \mathbb{Z}[(1+\sqrt{-27})/2]$. Moreover, $E/\mathbb{Q}(\sqrt{-3})$ has only one prime of additive reduction with Kodaira type IV$^*$ and $j$-invariant equal to $-12288000$. We note that in this example $L=K=\mathbb{Q}(\sqrt{-3})$.
\end{example}

We now proceed to the proof of Theorem \ref{potentialcm}.

\begin{proof}[Proof of Theorem \ref{potentialcm}] 

Assume that $p\neq 2$ and that $j_E \neq 0, 1728$. Let $\mathcal{O}_K$ be the ring of integers of $K$, let $F$ be the compositum $LK$, and let $E_F/F$ be the base change of $E/L$ to $F$. Since $K \subset F$, the curve $E_F/F$ has complex multiplication and, hence, it follows from Theorem \ref{fullendomorphism} that $E_{F}/F$ has good reducion or reduction of type I$_0^*$.
Since the ramification index $e(F/L)$ is either $1$ or $2$, the extension $F/L$ is tame because we assume that $p \neq 2$. Therefore, since $E_{F}/F$ has either good reduction or reduction of type I$_0^*$, using \cite[Theorem 3]{dokchitsertameextension}, we find that $E/L$ can only have good reduction or reduction of type III, III$^*$, or I$_0^*$. 

Assume now that $j_E=1728$ and that $v(2)=1$. Proceeding in the same way as in the proof of Theorem \ref{fullendomorphism} we see that $E/L$ has a short Weierstrass equation of the form $$y^2=x^3+Ax,$$ for some $A \in L^*$. The discriminant and $c_6$ invariant of this Weierstrass equation are $\Delta=-64A^3$ and $0$. Since $v(2)=1$ by assumption, we have that $v(\Delta)=6+3v(A)$ and, hence, $3$ divides  $v(\Delta)$. Let $\Delta_{min}$ be the discriminant of a minimal Weierstrass equation for $E/L$. Since when a change of Weierstrass equation is performed the valuation of the discriminant changes by a multiple of $12$, we find that $v(\Delta)-v(\Delta_{min})$ is a multiple of $12$. Since $3$ divides $v(\Delta)$, we obtain that $3$ divides $v(\Delta_{min})$. Therefore, since $v(2)=1$, using \cite[Tableau V]{pap} we find that $E/L$ can only have good reduction or reduction of type I$_0$, II, III, III$^*$, I$_2^*$ or I$_3^*$. Finally, the proof of other two cases is exactly the same as in Theorem \ref{fullendomorphism}.
\end{proof}

\begin{example}
Consider the elliptic curve $E_1/\mathbb{Q}$ given by the following Weierstrass equation $$y^2+xy=x^3-x^2-2x-1.$$ This is the curve with LMFDB \cite{lmfdb} label \href{https://www.lmfdb.org/EllipticCurve/Q/49/a/4}{49.a4} and $j$-invariant equal to $-3375$. The curve $E_1/\mathbb{Q}$ has potential complex multiplication by $K=\mathbb{Q}(\sqrt{-7})$ and it has bad reduction modulo $7$ of Kodaira type III.

Consider now the elliptic curve $E_2/\mathbb{Q}$ given by the following Weierstrass equation $$y^2+xy=x^3-x^2-1822x+30393.$$ This is the curve with LMFDB \cite{lmfdb} label \href{https://www.lmfdb.org/EllipticCurve/Q/49/a/1}{49.a1} and $j$-invariant equal to $16581375$. The curve $E_2/\mathbb{Q}$ has potential complex multiplication by $K=\mathbb{Q}(\sqrt{-7})$ and it has bad reduction modulo $7$ of Kodaira type III$^*$.

Thus, both the additional reduction types of Theorem \ref{potentialcm} do occur. We note that our examples are isogenous, so both reduction types can also occur in the same isogeny class.
\end{example}

\begin{example}
    Consider the elliptic curves $E_1/\mathbb{Q}$, $E_2/\mathbb{Q}$, $E_3/\mathbb{Q}$, $E_4/\mathbb{Q}$, and $E_5/\mathbb{Q}$ given by LMFDB \cite{lmfdb} labels \href{https://www.lmfdb.org/EllipticCurve/Q/32/a/3}{32.a3}, \href{https://www.lmfdb.org/EllipticCurve/Q/32/a/4}{32.a4}, \href{https://www.lmfdb.org/EllipticCurve/Q/64/a/3}{64.a3}, \href{https://www.lmfdb.org/EllipticCurve/Q/64/a/4}{64a4}, \href{https://www.lmfdb.org/EllipticCurve/Q/256/b/2}{256.b2}, respectively. All these elliptic curves have potential complex multiplication by $\mathbb{Q}(i)$ (and $j$-invariant equal to $1728$).  Moreover, they have reduction at $2$ of type III, I$_3^*$, I$_2^*$, II, and III$^*$, respectively. 
\end{example}

\begin{proposition}
    Let $R$ be a complete discrete valuation ring with valuation $v$, fraction field $L$ of characteristic $0$, and algebraically closed residue field $k_L$ of characteristic $p$. Let $E/L$ be an elliptic curve with $j$-invariant $j_E$.
    \begin{enumerate}
        \item  If $p=2$ and $j_E=1728$, then $E/L$ can not have reduction of type IV or IV$^*$.
        \item If $p=3$, $j_E=0$, and $v(3)$ is even, then $E/L$ can not have reduction of type III or III$^*$.
    \end{enumerate}

\end{proposition}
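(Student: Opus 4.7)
The plan is to mirror the discriminant-divisibility argument used in the proof of Theorem \ref{fullendomorphism}: for each of $j_E = 1728$ and $j_E = 0$ I would write down an explicit short Weierstrass model, read off the valuation of its (possibly non-minimal) discriminant modulo $3$ or $2$ respectively, and then use the fact that $v(\Delta) - v(\Delta_{\min})$ is always a multiple of $12$ to transfer the divisibility to $v(\Delta_{\min})$. The final step is to compare with the values of $v(\Delta_{\min})$ attached to each Kodaira type in the standard tables (as compiled in \cite{pap}) and to observe that the excluded types are incompatible with this divisibility.

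For part (i), by \cite[Proposition III.1.4]{aec} together with $j_E = 1728$, I may take $E/L$ to be given by $y^2 = x^3 + Ax$ with $A \in L^*$, whose discriminant is $\Delta = -64 A^3$. Then $v(\Delta) = 6v(2) + 3v(A) \equiv 0 \pmod 3$, and since $12 \mid v(\Delta) - v(\Delta_{\min})$ it follows that $v(\Delta_{\min}) \equiv 0 \pmod 3$ as well. The Kodaira types IV and IV$^*$ force $v(\Delta_{\min}) = 4$ and $v(\Delta_{\min}) = 8$ respectively; neither is divisible by $3$, so these types are ruled out.

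For part (ii), the hypothesis $j_E = 0$ gives a model $y^2 = x^3 + B$ with $B \in L^*$ and discriminant $\Delta = -432 B^2 = -2^4 \cdot 3^3 \cdot B^2$. Since $p = 3$ we have $v(2) = 0$, so $v(\Delta) = 3v(3) + 2v(B)$. The hypothesis that $v(3)$ is even now makes $3v(3)$ even, hence $v(\Delta)$, and therefore $v(\Delta_{\min})$, is even. The Kodaira types III and III$^*$ force $v(\Delta_{\min}) = 3$ and $v(\Delta_{\min}) = 9$ respectively, both odd; so these types cannot occur.

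The only genuinely delicate input is the correspondence between the Kodaira type and the valuation of the minimal discriminant ($2$ for II, $3$ for III, $4$ for IV, $6+n$ for I$_n^*$, $8$ for IV$^*$, $9$ for III$^*$, $10$ for II$^*$), which has to be applied in residue characteristics $2$ and $3$, where Tate's algorithm requires extra care. This is exactly what the Papadopoulos tables cited as \cite{pap} encode, and it is already the device driving the residue-characteristic-$2$ and $3$ parts of the proof of Theorem \ref{fullendomorphism}, so no new ingredient is needed.
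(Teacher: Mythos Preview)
Your proposal is correct and follows essentially the same approach as the paper's proof: both establish the divisibility $3\mid v(\Delta_{\min})$ (respectively $2\mid v(\Delta_{\min})$) from the explicit short Weierstrass model and then invoke Papadopoulos's tables (\cite[Tableau V]{pap} for $p=2$, \cite[Tableau III]{pap} for $p=3$) to exclude the types in question. One small caution: the naive correspondence you list (II $\leftrightarrow 2$, III $\leftrightarrow 3$, etc.) does \emph{not} hold uniformly in residue characteristics $2$ and $3$---this is precisely what Papadopoulos's tables refine---but it \emph{does} hold for the specific types IV, IV$^*$ at $p=2$ and III, III$^*$ at $p=3$ (these types have tame monodromy of order prime to $p$, hence conductor exponent $2$ and the expected $v(\Delta_{\min})$ via Ogg's formula), which is all you need.
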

\begin{proof}
  
 Assume that $p=2$ and that $j_E=1728$. Proceeding in the same way as in the proof of Theorem \ref{fullendomorphism} we see that $E/L$ has a short Weierstrass equation of the form $$y^2=x^3+Ax,$$ for some $A \in L^*$. The discriminant of this Weierstrass equation is $\Delta=-64A^3$. We have that $v(\Delta)=6v(2)+3v(A)$ and, hence, $3$ divides  $v(\Delta)$. Let $\Delta_{min}$ be the discriminant of a minimal Weierstrass equation for $E/L$. Since when a change of Weierstrass equation is performed the valuation of the discriminant changes by a multiple of $12$, we find that $v(\Delta)-v(\Delta_{min})$ is a multiple of $12$. Since $3$ divides $v(\Delta)$, we obtain that $3$ divides $v(\Delta_{min})$. Therefore, using \cite[Tableau V]{pap} we find that $E/L$ cannot have reduction of type IV or IV$^*$.

  Assume that $p=3$, $j_E=0$, and $v(3)$ is even.  Proceeding in the same way as in the proof of Theorem \ref{fullendomorphism} we see that $E/L$ has a short Weierstrass equation of the form $$y^2=x^3+B,$$ for some $B \in L^*$. The discriminant of this Weierstrass equation is $\Delta= -432B^2$.  Since $p = 3$ is even by assumption, we have that $v(\Delta)=3v(3)+2v(B)$ and, hence, $2$ divides  $v(\Delta)$ because $v(3)$ is even.  Let $\Delta_{min}$ be the discriminant of a minimal Weierstrass equation for $E/L$. Since when a change of Weierstrass equation is performed the valuation of the discriminant changes by a multiple of $12$, we find that $v(\Delta)-v(\Delta_{min})$ is a multiple of $12$. Since $2$ divides $v(\Delta)$, we obtain that $2$ divides $v(\Delta_{min})$. Therefore, using \cite[Tableau III]{pap} we find that $E/L$ cannot have reduction of type III or III$^*$.
\end{proof}

\section{Abelian varieties with complex multiplication}\label{cmabelianvarieties}

 In this section we first prove a general lemma which is a consequence of the N\'eron Ogg Shafarevich Criterion and is essentially due to Serre and Tate \cite{st}. Then we recall a few basic facts concerning reduction of abelian varieties and reduction of genus $2$ curves.

\begin{lemma}\label{lemmarootsofunity}
Let $R$ be a complete discrete valuation ring with fraction field $L$ of characteristic $0$ and finite residue field $k_L$ of characteristic $p>0$. Let $A/L$ be an abelian variety with complex multiplication by $K$ over $L$ and let $\mu'= | \mu (K) | $, where $\mu (K)$ are the roots of unity contained in $K$. Then there exists a finite extension $M/L^{unr}$, where $L^{unr}$ is the maximal unramified extension of $L$, which has degree dividing $\mu'$ and such that the base change $A_M/M$ has good reduction.
\end{lemma}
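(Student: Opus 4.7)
The plan is to construct the desired extension $M$ as the minimal extension of $L^{unr}$ over which $A$ acquires good reduction, and then to bound its degree by analyzing the induced $G$-action on the special fiber of the N\'eron model.

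I would first invoke the theorem of Serre and Tate \cite{st} (or Oort \cite{oortgoodandstablereductionofabelianvarieties}) to guarantee that $A/L$ has potentially good reduction. Fixing an auxiliary prime $\ell \neq p$, the $\ell$-adic Tate module $V_\ell(A)$ is a faithful module over $K \otimes_{\mathbb{Q}} \mathbb{Q}_\ell$, and comparing $\mathbb{Q}_\ell$-dimensions forces it to be free of rank one over $K \otimes \mathbb{Q}_\ell$. Since the CM is defined over $L$, the action of the absolute Galois group $G_L$ on $V_\ell(A)$ commutes with the $K$-action, producing a character $\chi_\ell : G_L \to (K \otimes \mathbb{Q}_\ell)^*$ whose restriction to the absolute inertia $I_L = G_{L^{unr}}$ has finite image by potential good reduction. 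By the N\'eron--Ogg--Shafarevich criterion, the fixed field $M$ of $\ker(\chi_\ell|_{I_L})$ is precisely the minimal extension of $L^{unr}$ over which $A$ attains good reduction; since the target of $\chi_\ell$ is abelian, $M/L^{unr}$ is a finite Galois extension whose Galois group $G$ a priori embeds into the finite torsion subgroup of $(K \otimes \mathbb{Q}_\ell)^*$.

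The heart of the argument is to improve this to an embedding of $G$ into $\mu(K)$ itself. Since $L^{unr}$ has algebraically closed residue field, $M/L^{unr}$ is totally ramified and $G$ acts trivially on the common residue field $k$. Letting $\mathcal{A}/R_M$ denote the N\'eron model of $A_M$, which is an abelian scheme by good reduction, the action of $G$ on $A_M = A \otimes_{L^{unr}} M$ (via the second factor) extends to $\mathcal{A}$ by the N\'eron mapping property, and its reduction is a $k$-linear action on $\mathcal{A}_k$ that fixes the zero section and commutes with the inherited $K$-action. The resulting homomorphism $G \to \text{Aut}_k(\mathcal{A}_k)$ is injective, since any $\sigma \in G$ acting trivially on $\mathcal{A}_k$ must act trivially on all $\ell^n$-torsion, hence on $V_\ell(A)$ via the specialization isomorphism, forcing $\chi_\ell(\sigma) = 1$.

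To conclude, I would identify the $K$-linear automorphisms of $\mathcal{A}_k$. The centralizer $C$ of $K$ in $\text{End}_k^0(\mathcal{A}_k)$ acts $(K \otimes \mathbb{Q}_\ell)$-linearly on the rank-one module $V_\ell(\mathcal{A}_k)$, so $C \otimes \mathbb{Q}_\ell \subseteq K \otimes \mathbb{Q}_\ell$; since this holds for every $\ell \neq p$ and $K \subseteq C$, we deduce $C = K$. The $K$-linear automorphisms of $\mathcal{A}_k$ therefore form the unit group $\mathcal{O}^*$ of the order $\mathcal{O} := K \cap \text{End}_k(\mathcal{A}_k) \subseteq \mathcal{O}_K$, whose torsion subgroup is contained in $\mu(K)$. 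Since $G$ is finite, the composite $G \hookrightarrow \mathcal{O}^*$ lands in $\mu(K)$, yielding $[M:L^{unr}] = |G| \mid \mu'$. The main technical subtlety I foresee is justifying cleanly that the semi-linear $G$-action on $A \otimes_{L^{unr}} M$ descends to $k$-linear automorphisms of $\mathcal{A}_k$ with trivial kernel on prime-to-$p$ torsion; once this is in place, the centralizer computation via the $\ell$-adic representation closes the argument.
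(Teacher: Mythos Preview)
Your proposal is correct. The overall architecture matches the paper's: both take $M$ to be the minimal extension of $L^{unr}$ over which $A$ acquires good reduction, identify $\mathrm{Gal}(M/L^{unr})$ with the image of inertia under the $\ell$-adic representation via the N\'eron--Ogg--Shafarevich criterion, and then bound that image inside $\mu(K)$.

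The difference lies in how the last step is justified. The paper simply invokes \cite[Theorem~6]{st} as a black box to conclude that $\rho_\ell(I(\bar v))\subseteq\mu(K)$, making the whole proof a few lines of citations. You instead reprove that theorem in place: you pass to the special fiber $\mathcal{A}_k$ of the N\'eron model over $R_M$, observe that the semilinear $G$-action becomes $k$-linear there and commutes with the inherited $K$-action, check injectivity via the specialization isomorphism on $\ell$-power torsion, and then run a centralizer computation to force the image into $K^\times\cap\mathrm{Aut}(\mathcal{A}_k)\subseteq\mu(K)$. This is essentially the argument underlying Serre--Tate's Theorem~6, so your route is not genuinely different in spirit---it is a self-contained unpacking of the reference the paper cites. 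The payoff is transparency (one sees exactly where the bound $\mu'$ comes from), at the cost of length; the paper's version trades that transparency for brevity. The technical subtlety you flag---that the semilinear $G$-action extends to the N\'eron model and reduces to a $k$-linear action with trivial kernel on prime-to-$p$ torsion---is real but standard, and your sketch handles it correctly.
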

\begin{proof}
This is a consequence of results of Serre and Tate in \cite{st}. We include some of the details for completeness. Fix a separable closure $\widebar{L}$ of $L$. Let $\ell \neq p $ be a prime and let $\rho_{\ell}: \text{Gal}(\widebar{L}/L) \longrightarrow \text{Aut}(T_{\ell}(A))$ be the $\ell$-adic Galois representation of $A/L$. Let $v$ be the valuation of $L$, let $\bar{v}$ be the extension of $v$ to $\widebar{L}$, and let $I(\bar{v})$ be the inertia group of $\bar{v}$. Note that the extension of $v$ to $\widebar{L}$ is unique because $R$ is complete. Since $A/L$ has complex multiplication by $K$ over $L$, the image $\rho_{\ell}(I(\bar{v}))$  is contained in $\mu (K)$ (see \cite[Theorem 6]{st}). Let now $M/L^{unr}$ be the minimal Galois extension over which $A_{L^{unr}}/L^{unr}$ acquires good reduction. Such an extension exists by \cite[Theorem 6]{st} and \cite[Corollary 3]{st}. Then, we have that $\text{ker}({\rho_{\ell}}\vert_{I(\bar{v})})=\text{Gal}(\widebar{L}/M)$ (by \cite[Corollary 3]{st}) and that $|\text{Gal}(M/L^{unr})|=|\rho_{\ell}(I(\bar{v}))|$ which divides $\mu'$. This proves our lemma.
\end{proof}

\begin{example}(see also \cite[Example 3.1]{mentzeloskodairaandtorsion})
Let $p$ be an odd prime and $s$ be an integer with $1 \leq s \leq p-2$. Consider the smooth projective curve $C_{p,s}/\mathbb{Q}$ birational to $$y^p=x^s(1-x).$$
The curve $C_{p,s}/\mathbb{Q}$ has genus $\frac{p-1}{2}$. The Jacobian $J_{p,s}/\mathbb{Q}$ of $C_{p,s}/\mathbb{Q}$ has complex multiplication by $K=\mathbb{Q}(\zeta_p)$ defined over $\mathbb{Q}(\zeta_p)$ (see \cite[Page 202]{someresultsgrossrohrlich}).
It turns out that $J_{p,s}/\mathbb{Q}$ has good reduction away from $p$ and potentially good reduction modulo $p$. When  $C_{p,s}/\mathbb{Q}$ is tame (see \cite[Example 5.1]{lorenzini1993} for the definition) $J_{p,s}/\mathbb{Q}$ has purely additive reduction modulo $p$ and achieves good reduction after a totally ramified extension of degree $2(p-1)$ (see Anmerkung in \cite[Page 339]{maedastablereductionfermatcurve} for the last statement).
\end{example}

Let $R$ be a complete discrete valuation ring with fraction field $L$ of characteristic $0$ and perfect residue field $k_L$. Let $A/L$ be an abelian variety of dimension $g$. We denote by $\mathcal{A}/R$ the N\'eron model of $A/L$ (see \cite{neronmodelsbook} for the definition as well as the basic properties of N\'eron models). The special fiber $\mathcal{A}_{k_L}/k_L$ of $\mathcal{A}/R$ is a smooth commutative group scheme. We denote by $\mathcal{A}^0_{k_L}/k_L$ the connected component of the identity of $\mathcal{A}_{k_L}/k_L$. Since $k_L$ is perfect, by a theorem of Chevalley (see \cite[Theorem 1.1]{con}) we have a short exact sequence $$0\longrightarrow T \times U \longrightarrow \mathcal{A}^0_{k_L} \longrightarrow B \longrightarrow 0, $$
where $T/k_L$ is a torus, $U/k_L$ is a unipotent group, and $B/k_L$ is an abelian variety. The number $\text{dim}(U)$ (resp. $\text{dim}(T)$, $\text{dim}(B)$) is called the unipotent (resp. toric, abelian) rank of $A/L$. By construction, $g=\text{dim}(U)+\text{dim}(T)+\text{dim}(B)$. We say that $A/L$ has purely additive reduction if $g=\text{dim}(U)$, or equivalently, if $\text{dim}(T)=\text{dim}(B)=0$.

The following two theorems will be very useful in the next section.

\begin{theorem}\label{cmpotentiallygoodreduction}(see \cite[Lemma 2.4]{oortgoodandstablereductionofabelianvarieties})
Let $R$ be a complete discrete valuation ring with fraction field $L$ and perfect residue field. Let $A/L$ be a simple abelian variety with complex multiplication by $K$ over the field $L$. Then $A/L$ has either purely additive or good reduction. 
\end{theorem}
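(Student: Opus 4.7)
The plan is to combine the Serre--Tate control of the inertia image for a CM abelian variety (Lemma~\ref{lemmarootsofunity}) with the standard identification of the $\ell$-adic Tate module of $\mathcal{A}^0_{k_L}$ with the inertia invariants of $T_\ell(A)$. First I would pass to $L^{unr}$ so that the residue field becomes algebraically closed, which is harmless since the toric, unipotent, and abelian ranks of the N\'eron model are invariant under unramified base change. Fix a prime $\ell \neq p$ and let $I = \text{Gal}(\widebar{L}/L)$.

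The CM hypothesis implies that $V_\ell(A)$ is free of rank one over $K \otimes_{\mathbb{Q}} \mathbb{Q}_\ell \cong \prod_{\mathfrak{p} \mid \ell} K_{\mathfrak{p}}$, with $K$ acting by multiplication and commuting with the Galois action. As recalled in the proof of Lemma~\ref{lemmarootsofunity}, Serre and Tate showed that $\rho_\ell(I) \subseteq \mu(K) \subseteq K^{\times}$. Suppose that $A/L$ does not have good reduction, so that $\rho_\ell(I)$ is nontrivial and contains some $\zeta \neq 1$. In the decomposition $V_\ell(A) = \bigoplus_{\mathfrak{p} \mid \ell} V_{\mathfrak{p}}$ into $K_\mathfrak{p}$-isotypic components, $\zeta$ acts on $V_{\mathfrak{p}}$ by multiplication by its image in $K_{\mathfrak{p}}^{\times}$; since $K \hookrightarrow K_\mathfrak{p}$ this image is still $\neq 1$, so $1 - \zeta$ is a unit of $K_\mathfrak{p}$ and $\zeta - \text{id}$ is invertible on $V_\mathfrak{p}$. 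It follows that $V_\ell(A)^I = 0$.

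On the other hand, since $\ell \neq p$ the $\ell^n$-torsion of the N\'eron model is finite \'etale over $R$, and this yields the classical identification $T_\ell(\mathcal{A}^0_{k_L}) \cong T_\ell(A)^I$. Combined with the Chevalley decomposition, and using that $V_\ell$ vanishes on unipotent groups (for $\ell \neq p$) while contributing $t$ to a torus of dimension $t$ and $2a$ to an abelian variety of dimension $a$, this gives
$$
\dim_{\mathbb{Q}_\ell} V_\ell(A)^I = 2\dim B + \dim T.
$$
Hence $2\dim B + \dim T = 0$, forcing $\dim B = \dim T = 0$, and therefore $A/L$ has purely additive reduction. The main point requiring care is the identification $T_\ell(\mathcal{A}^0_{k_L}) \cong T_\ell(A)^I$, which is classical and independent of the CM hypothesis; everything else is a short and direct consequence of Lemma~\ref{lemmarootsofunity} together with the simplicity assumption that forces $V_\ell(A)$ to be free of rank one over $K\otimes\mathbb{Q}_\ell$.
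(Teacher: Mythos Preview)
The paper does not prove this theorem; it simply quotes it from Oort \cite[Lemma 2.4]{oortgoodandstablereductionofabelianvarieties} and uses it as a black box. Your argument is correct and is essentially the standard one: once inertia acts on the rank-one $K\otimes_{\mathbb{Q}}\mathbb{Q}_\ell$-module $V_\ell(A)$ through $\mu(K)\subset K^\times$, any nontrivial element $\zeta$ has $\zeta-1$ invertible in every completion $K_{\mathfrak p}$, so $V_\ell(A)^I=0$, and the identification $T_\ell(\mathcal{A}^0_{k_L})\cong T_\ell(A)^I$ then forces the toric and abelian ranks to vanish.

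One caveat worth recording: you extract the containment $\rho_\ell(I)\subseteq\mu(K)$ from the proof of Lemma~\ref{lemmarootsofunity}, but that lemma is stated in the paper only for \emph{finite} residue fields and in characteristic~$0$, whereas the present theorem is formulated for arbitrary perfect residue fields with no hypothesis on $\mathrm{char}\,L$. The inertia-image statement does hold in that generality (this extension is exactly what Oort supplies), so the issue is one of attribution rather than mathematics; but as written your proof relies, at the crucial step, on the very reference the theorem is citing.
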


\begin{theorem}\label{theoremlorenzini1990}(see \cite[Proposition 2.7]{lorenzini1990})
Let $R$ be a complete discrete valuation ring with fraction field $L$ and algebraically closed residue field of characteristic $p > 5$. Let $C/L$ be a projective, smooth, and geometrically connected curve of genus $2$ with Jacobian $Jac(C)/L$. Assume that the Jacobian $Jac(C)/L$ has purely additive and potentially good reduction. Then $[M:L] \leq 10$.
\end{theorem}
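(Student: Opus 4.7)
The plan is to bound $[M:L]$ by analyzing the inertia action on an $\ell$-adic Tate module of $Jac(C)$, where $M/L$ denotes the minimal extension over which $Jac(C)$ acquires good reduction. Fix $\ell\neq p$ and let $\rho_\ell\colon G_L\to\mathrm{Aut}(T_\ell\,Jac(C))$ be the $\ell$-adic Galois representation. Since $k_L$ is algebraically closed, $G_L$ coincides with the inertia group, and since $Jac(C)/L$ has potentially good reduction the N\'eron--Ogg--Shafarevich criterion identifies $G:=\mathrm{Gal}(M/L)$ with the finite image $\rho_\ell(G_L)\subset\mathrm{GL}_4(\mathbb{Z}_\ell)$. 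Moreover $\mu_{\ell^\infty}\subset L$ (because $k_L$ is algebraically closed), so $G$ fixes $\mathbb{Z}_\ell(1)$ pointwise and hence preserves the Weil pairing; thus $G\subset\mathrm{Sp}_4(\mathbb{Z}_\ell)$.

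The first step is to show that $G$ is tame, and hence cyclic (tame totally ramified extensions of a complete DVR with algebraically closed residue field are cyclic of order prime to $p$). The wild inertia subgroup has a pro-$p$ image in $\mathrm{Sp}_4(\mathbb{Z}_\ell)$, which is therefore a finite $p$-group; it injects into $\mathrm{Sp}_4(\mathbb{F}_\ell)$ after reduction modulo $\ell$, and for $p>5$ a standard bound on $p$-torsion in $\mathrm{Sp}_4(\mathbb{F}_\ell)$ combined with the constraint that characteristic polynomials on $T_\ell\,Jac(C)$ lie in $\mathbb{Z}[x]$ forces this image to be trivial. Let $\sigma$ be a generator of $G$ and set $n=[M:L]$. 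The purely additive hypothesis is equivalent, via Grothendieck's formula for the toric and abelian ranks of the N\'eron model, to $T_\ell\,Jac(C)^G=0$; equivalently, $1$ is not an eigenvalue of $\sigma$. Since $\sigma\in\mathrm{Sp}_4(\mathbb{Z}_\ell)$ has finite order $n$, its characteristic polynomial is a product of cyclotomic polynomials $\Phi_{n_1}\cdots\Phi_{n_r}\in\mathbb{Z}[x]$ with $n_i\geq 2$, $\sum\varphi(n_i)=4$ and $\mathrm{lcm}(n_i)=n$. Enumerating the possibilities yields $n\in\{2,3,4,5,6,8,10,12\}$, so a priori $n\leq 12$.

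To sharpen $n\leq 12$ to $n\leq 10$, one translates $\sigma$ into a polarization-preserving automorphism $\bar\sigma$ of the principally polarized abelian surface that is the special fiber of the N\'eron model of $Jac(C)_M$. Since $Jac(C)$ has potentially good reduction, the stable reduction of $C$ is of compact type: either a smooth genus $2$ curve, or two smooth elliptic curves meeting at a node; the purely additive hypothesis precludes this stable reduction from being defined already over $L$. In the smooth case, Torelli lifts $\bar\sigma$ to a cyclic automorphism of a smooth genus $2$ curve in characteristic $p>5$, and a Riemann--Hurwitz analysis of the cyclic cover $C\to C/\langle\bar\sigma\rangle$ (whose quotient must have genus $0$, since $Jac(C)^G=0$) rules out $n=12$: the required ramification $\sum(e_i-1)=26$ with $e_i\mid 12$ admits no realization as cyclic cover data, whereas $n=10$ is achieved by the Jacobian of $y^2=x^5-1$. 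The compact-type case with two elliptic components requires a parallel argument on the nodal curve $E_1\vee_P E_2$ using the constraint $(E_1\times E_2)^G=0$ imposed by purely additive reduction. The main obstacle is precisely this last step of excluding $n=12$: it amounts to a Bolza-type classification of cyclic automorphism groups of principally polarized abelian surfaces of Jacobian type, showing that in characteristic $p>5$ the maximal such cyclic order is $10$.
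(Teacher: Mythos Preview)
The paper does not supply its own proof here; the statement is quoted from \cite[Proposition~2.7]{lorenzini1990} and is invoked only once, in the proof of Theorem~\ref{potentiallygoodreductionproof}(vi), where the curve $C$ itself is already assumed to have potentially good reduction. So there is nothing in the paper to compare against directly, and your outline must stand on its own.

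Your skeleton is the standard one and is correct through the bound $n\le 12$: embed $G=\mathrm{Gal}(M/L)$ in $\mathrm{Sp}_4(\mathbb{Z}_\ell)$ via the Weil pairing; kill wild inertia using $p>5$ (an element of order $p\ge 7$ would force the degree-$(p-1)$ polynomial $\Phi_p$ to divide a degree-$4$ characteristic polynomial); conclude $G=\langle\sigma\rangle$ is cyclic of order $n$ prime to $p$; translate ``purely additive with potentially good reduction'' into $T_\ell(Jac(C))^{\sigma}=0$; and enumerate products $\prod\Phi_{n_i}$ of total degree $4$ with all $n_i\ge 2$.

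The exclusion of $n=12$ is where your argument has real gaps. In the smooth stable reduction case, your Riemann--Hurwitz sentence is not correct as written: the branch-data equation $\sum_Q(12-12/e_Q)=26$ with $e_Q\mid 12$ and $\mathrm{lcm}(e_Q)=12$ \emph{does} admit the signatures $(12,4,2)$ and $(4,4,3)$. What actually fails is the cyclic monodromy relation in $\mathbb{Z}/12\mathbb{Z}$: one checks that there are no elements of those prescribed orders summing to $0$. You need to carry out that check explicitly, not just the Riemann--Hurwitz numerics. More seriously, in the nodal case $E_1\cup_P E_2$ your proposed ``Bolza-type classification'' cannot close the gap in the way you suggest: the product $E_1\times E_2$ with $j(E_1)=1728$ and $j(E_2)=0$ carries an automorphism of order $12$ (order $4$ on the first factor, order $6$ on the second, both fixing the marked point) whose characteristic polynomial on $T_\ell$ is $\Phi_4\Phi_6$, hence with no eigenvalue $1$. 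So the fixed-point-free condition alone does not rule out $n=12$ on the reducible locus, and indeed Liu's tables permit $[M:L]=12$ when the stable reduction is a pair of elliptic curves. Either a finer obstruction is required here, or the hypothesis you actually need is that $C$ (not merely $Jac(C)$) has potentially good reduction --- which is the only setting in which the present paper applies the result, and in which your corrected smooth-case argument already suffices.
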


 We now recall some background material on reduction of algebraic curves. The reader is referred to \cite[Chapter 10]{Liubook} for more information on this topic. A projective, connected, and reduced curve $C/\bar{k}$ over an algebraically closed field $\bar{k}$ is called stable if it has arithmetic genus greater or equal to $2$, its singular points are ordinary double points, and all of its irreducible components that are isomorphic to $\mathbb{P}^1_{\bar{k}}$  meet the other components in at least $3$ points. Let $R$ be a complete discrete valuation ring with fraction field $L$ and algebraically closed residue field, and let $C/L$ be a smooth, projective, and geometrically connected curve of genus $g \geq 2$. Recall that a stable model of $C/L$ is a proper and flat scheme $\mathcal{C}/R$ whose generic fiber is isomorphic to the curve $C/L$ and whose special fiber is a stable curve. We will say that $C/L$ has stable reduction if it has a model whose special fiber is a stable curve over the residue field. 

\begin{theorem}\label{theoremextension}(see \cite[Theorem 10.4.44]{Liubook})
Let $R$ be a complete discrete valuation ring with fraction field $L$ and algebraically closed residue field. Let $C/L$ be a smooth, projective, and geometrically connected curve of genus $g \geq 2$. Then there exists a (unique) finite extension $M/L$ such that the curve $C_M/M$ has stable reduction that has the following minimality property; for every other finite extension $N/L$ the base change $C_N/N$ has stable reduction if and only if $M \subseteq N$.
\end{theorem}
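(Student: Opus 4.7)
The plan is to prove this as the Deligne--Mumford stable reduction theorem, establishing existence, uniqueness, and the minimality property in turn.

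For existence, I would reduce to the Jacobian. By Grothendieck's semistable reduction theorem, there is a finite extension $M'/L$ over which $\text{Jac}(C)_{M'}$ acquires semiabelian reduction; equivalently, by the Ogg--N\'eron--Shafarevich criterion, the inertia $\text{Gal}(\overline{L}/M')$ acts unipotently on $T_\ell(\text{Jac}(C))$ for any prime $\ell$ different from the residue characteristic. Raynaud's identification of the N\'eron model of the Jacobian with the relative Picard functor of a semistable model provides the key bridge: $C$ acquires semistable reduction over $M'$ if and only if $\text{Jac}(C)$ acquires semiabelian reduction. Given a semistable model $\mathcal{C}'/R_{M'}$, I would then successively contract the unstable rational components (copies of $\mathbb{P}^1_{\bar{k}}$ meeting the rest of the special fiber in at most two points) via Castelnuovo's criterion; the resulting model $\mathcal{C}/R_M$ with $M=M'$ is stable.

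For uniqueness, I would exploit that the dualizing sheaf $\omega_{\mathcal{C}/R}$ of a stable model is relatively ample, so any stable model is intrinsically reconstructed from its generic fiber as $\text{Proj}\bigl(\bigoplus_{n\geq 0} H^0(\mathcal{C},\omega_{\mathcal{C}/R}^{\otimes n})\bigr)$. Hence two stable models over the same DVR with the same generic fiber are canonically isomorphic.

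For the minimality property, the key observation is that $C_N/N$ has stable reduction if and only if $\text{Gal}(\overline{L}/N)$ acts unipotently on $T_\ell(\text{Jac}(C))$. Grothendieck's quasi-unipotence theorem furnishes a unique maximal open subgroup $H$ of $\text{Gal}(\overline{L}/L)$ on which the action is unipotent, and the fixed field of $H$ is exactly our $M$. Therefore $C_N$ has stable reduction if and only if $\text{Gal}(\overline{L}/N) \subseteq H$, which by Galois theory is equivalent to $M \subseteq N$; the converse implication also follows directly from base change of the stable model over $R_M$ to $R_N$. The hardest technical input is Raynaud's bridge between semistable reduction of the curve and semiabelian reduction of its Jacobian, which together with Grothendieck's quasi-unipotence result powers the entire argument.
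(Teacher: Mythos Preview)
The paper does not prove this theorem; it is stated as background material with a citation to \cite[Theorem 10.4.44]{Liubook} and is used as a black box in the subsequent sections. So there is no ``paper's own proof'' to compare against.

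That said, your sketch is the standard route to the Deligne--Mumford stable reduction theorem and is essentially correct as an outline. A couple of minor remarks: the contraction of unstable rational components in a semistable model is not literally Castelnuovo's criterion (which concerns $(-1)$-curves on regular surfaces) but rather Artin's contractibility criterion or the direct combinatorial construction in the nodal case; and in your existence step you set $M=M'$ for some extension over which the Jacobian has semiabelian reduction, whereas the minimal such $M$ is only identified later as the fixed field of the maximal open subgroup $H$ acting unipotently---so existence gives \emph{an} $M'$, and the minimality argument then pins down the canonical $M$ inside it. These are cosmetic points; the architecture (Grothendieck quasi-unipotence, Raynaud's bridge between semistable reduction of $C$ and semiabelian reduction of $\text{Jac}(C)$, ampleness of $\omega$ for uniqueness) is exactly what drives the result.
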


Using the same assumptions as in the preceding theorem, we denote the stable model of $C_M/M$ by $\mathcal{C}^{st}$ which is unique by \cite[Theorem 10.3.34]{Liubook}. Below we will refer to $\mathcal{C}^{st}$ as the stable model, but the reader should keep in mind that $\mathcal{C}^{st}$ is the stable model of $C_M/M$. If $C/L$ has potentially good reduction, then $\mathcal{C}^{st}$ is smooth. On the other hand, if $C/L$ does not have potentially good reduction but the Jacobian $Jac(C)/L$ of $C/L$ has potentially good reduction, then the special fiber of $\mathcal{C}^{st}$ is a union of two elliptic curves meeting at a single point (see the paragraph before \cite[Proposition 2]{liu1993}).

The following lemma will be very useful in the proofs of the next section because it will help us exclude certain reduction types of the special fiber of the minimal proper regular model of our curve.

\begin{lemma}\label{minimalregularmoderationalcurves}
Let $R$ be a discrete valuation ring with fraction field $L$ and algebraically closed residue field $k$. Let $C/L$ be a projective, smooth, and geometrically connected curve of genus $2$. Assume that the Jacobian $Jac(C)/L$ has purely additive reduction. Then the special fiber of the minimal proper regular model of $C/L$ contains only rational curves.
\end{lemma}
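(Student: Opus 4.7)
The plan is to read the hypothesis of purely additive reduction of $Jac(C)/L$ as the vanishing of the abelian rank of the identity component of the Néron model of $Jac(C)$, and then translate that vanishing back into a statement about the normalizations of the irreducible components of the special fiber of $\mathcal{C}^{min}/R$ via Raynaud's description of $\mathcal{J}^0$ in terms of the relative Picard scheme of a regular model.

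Concretely, I would first reduce to the case where $R$ is complete (or at least strictly Henselian); this is harmless since the formation of both the minimal proper regular model of $C/L$ and the Néron model of $Jac(C)/L$ is compatible with such a base change, and so is the property of being purely additive. Writing $\mathcal{C}^{min}_{k}=\sum_i d_i\Gamma_i$ with $\Gamma_i$ the reduced irreducible components and $d_i$ their multiplicities, the next step is to invoke Raynaud's theorem (in the form of \cite[Chapter~9, Theorem~4]{neronmodelsbook}, or the corresponding discussion in \cite[Chapter~9]{Liubook}) to obtain an identification $\mathcal{J}^{0}\cong \mathrm{Pic}^{0}_{\mathcal{C}^{min}/R}$, and in particular an identification of special fibers $\mathcal{J}^{0}_{k}\cong \mathrm{Pic}^{0}(\mathcal{C}^{min}_{k})$.

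From there I would apply the Chevalley decomposition to the proper group scheme $\mathrm{Pic}^{0}(\mathcal{C}^{min}_{k})$ over the algebraically closed field $k$: its abelian quotient is isogenous to the product $\prod_i \mathrm{Jac}(\widetilde{\Gamma}_i)$ of the Jacobians of the normalizations of the irreducible components. The hypothesis that $Jac(C)/L$ has purely additive reduction says that the abelian rank of $\mathcal{J}^{0}_k$ is zero, so each $\mathrm{Jac}(\widetilde{\Gamma}_i)$ must be trivial, i.e.\ every $\widetilde{\Gamma}_i$ has geometric genus zero. Hence each $\Gamma_i$ is rational, which is exactly the claim.

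The main obstacle is the precise hypothesis under which Raynaud's identification $\mathcal{J}^{0}\cong \mathrm{Pic}^{0}_{\mathcal{C}^{min}/R}$ is valid, in particular the condition $\gcd_i(d_i)=1$ for the multiplicities in the minimal regular model. In our setting $k$ is algebraically closed, which is enough to guarantee this condition for the minimal proper regular model of a smooth geometrically connected curve of positive genus; if one wishes to sidestep this subtlety, one can instead compare $\mathcal{C}^{min}$ to a regular model for which the gcd condition is evidently satisfied by a sequence of blow-ups and blow-downs along rational components, which does not change the number or the genera of non-rational components. Everything else is a routine unpacking of the Chevalley decomposition for $\mathrm{Pic}^{0}$ of a proper curve over an algebraically closed field.
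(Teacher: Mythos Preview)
Your argument is correct and is essentially the paper's own: both go through Raynaud's identification of $\mathcal{J}^{0}$ with $\mathrm{Pic}^{0}$ of a regular proper model to read the abelian rank of $\mathcal{J}^0_k$ off as $\sum_i g(\widetilde{\Gamma}_i)$; the paper simply routes this through \cite[Theorem~6.1]{neronmodelslorenzini} after a preparatory sequence of blow-ups, and handles the technical hypothesis you flag (their ``$r=1$'') by invoking \cite[Proposition~9.5.1]{raynaud1990}, which is a genus-$2$ input rather than a general fact about algebraically closed residue fields. One caution about your fallback: blow-ups and blow-downs of closed points do not change the gcd of the multiplicities, so that alternative cannot manufacture a model with $\gcd_i(d_i)=1$ if the original gcd exceeds $1$; to close this, either cite the genus-$2$ result as the paper does, or note that the abelian-rank formula for $\mathcal{J}^0_k$ in terms of the $g(\widetilde{\Gamma}_i)$ already holds without the gcd hypothesis (the quotient by the closure of the unit section in \cite[Chapter~9]{neronmodelsbook} is by a finite group scheme and does not affect dimensions).
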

\begin{proof}
This is well-known to the experts. Nevertheless, we provide some details of the proof. Let $\mathcal{C}^{min}/R$ be the minimal proper regular model of $C/L$. By performing a sequence of blowups of closed points of $\mathcal{C}^{min}/R$ we can find a new regular model that has the properties $(1)$ and $(2)$ of \cite[Section 6]{neronmodelslorenzini}. Moreover, the condition $r=1$ of \cite[Theorem 6.1]{neronmodelslorenzini} is automatically satisfied because $g=2$ (see \cite[Proposition 9.5.1]{raynaud1990}). Since blowing up at a closed point does not introduce curves of genus bigger than $0$ and $Jac(C)/L$ has purely additive reduction by assumption, it follows from \cite[Theorem 6.1, Part (a)]{neronmodelslorenzini} that the special fiber of the minimal proper regular model of $C/L$ can only contain rational curves.
\end{proof}

\section{Genus 2 CM curves}\label{sectiongenus2cm}

In this section we prove Theorem \ref{genus2jacobiangcurvegoodreduction}. For the convenience of the reader, we have repeated the statements to be proved and we have split the proof of the theorem into parts.

\begin{remark}
Recall that a quartic CM field $K$ is a totally imaginary quadratic extension of a totally real quadratic number field. Since the degree of the cyclotomic field $\mathbb{Q}(\zeta_{m})$ over $\mathbb{Q}$ is $\phi(m)$, we find that if $K$ is a quartic CM field which contains a primitive root of unity of order $m$, then $m=2,3,4,5,6,8,10$ or $12$. Note that $\mathbb{Q}(\zeta_5) \cong \mathbb{Q}(\zeta_{10})$, $\mathbb{Q}(\zeta_8)$, and $\mathbb{Q}(\zeta_{12})$ are all quartic CM fields and that $\mathbb{Q}(\zeta_8)$, $\mathbb{Q}(\zeta_{10})$, and $\mathbb{Q}(\zeta_{12})$ are the only quartic CM fields that have $8$, $10$, and $12$ roots of unity, respectively. From the above discussion, it follows that the number of roots of unity for a quartic CM field is $2,4,6,8,10,$ or $12$.
\end{remark}

\begin{lemma}\label{excludetypes}
Let $R$ be a complete discrete valuation ring with fraction field $L$ of characteristic $0$ and algebraically closed residue field $k_L$ of characteristic $p>5$. Let $C/L$ be a projective, smooth, and geometrically connected curve of genus $2$ with Jacobian $Jac(C)/L$ that has complex multiplication over the field $L$. Assume that $C/L$ has potentially good reduction. Then $C/L$ cannot have reduction type $[II]$, $[V]$, or $[V^*]$.
\end{lemma}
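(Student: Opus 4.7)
The plan is to compute the abelian rank of the N\'eron model of $Jac(C)/L$ in two ways --- first from the complex multiplication structure, then from the Namikawa-Ueno type of $C$ --- and to derive a contradiction when the type is $[II]$, $[V]$, or $[V^*]$.

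Since $Jac(C)/L$ is an isotypic abelian surface with complex multiplication over $L$, it is either simple (with CM by a quartic CM field) or isogenous to $E\times E$ for a CM elliptic curve $E/L$. In the simple case, Theorem \ref{cmpotentiallygoodreduction} ensures that $Jac(C)/L$ has either good reduction (abelian rank $2$) or purely additive reduction (abelian rank $0$). In the non-simple case, Lemma \ref{twistlemma} gives the same dichotomy for $E/L$, and since isogenies of abelian varieties preserve the unipotent, toric, and abelian ranks of the identity component of the N\'eron model, the dichotomy transfers to $Jac(C)/L$. In either case, the abelian rank of $Jac(C)/L$ lies in $\{0,2\}$.

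On the other hand, the abelian rank of $Jac(C)/L$ equals the sum of the genera of the (geometric) irreducible components of the reduced special fiber of the minimal proper regular model $\mathcal{C}^{min}/R$; this is the classical identification of the identity component of the N\'eron model of the Jacobian with the identity component of the Picard scheme of $\mathcal{C}^{min}_{k_L}$, and is essentially the same input that underlies Lemma \ref{minimalregularmoderationalcurves}. An inspection of the Namikawa-Ueno tables as presented in \cite{Liugenus2algorithm} shows that each of the reduction types $[II]$, $[V]$, and $[V^*]$ has a special fiber containing exactly one component of positive genus --- an elliptic curve --- with all remaining components rational. Hence if $C/L$ had any one of these reduction types, the abelian rank of $Jac(C)/L$ would equal $1$.

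Since $1 \notin \{0,2\}$, the reduction types $[II]$, $[V]$, and $[V^*]$ are excluded, completing the proof. I expect the principal obstacle to be the direct verification against the Namikawa-Ueno classification that each of these three types indeed contains precisely one elliptic-curve component in its special fiber; this is a case-by-case consultation of the published tables, but is combinatorially routine and can be cross-checked against the fact that each of $[II]$, $[V]$, $[V^*]$ is conspicuously absent from the $\mu'$-indexed lists of admissible types in Theorem \ref{genus2jacobiangcurvegoodreduction}(i).
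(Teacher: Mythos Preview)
Your argument for type $[II]$ is correct and matches the paper's approach: the special fiber of type $[II]$ does contain a genus-$1$ component (see \cite[p.~155]{namikawauenoclassification}), so Lemma~\ref{minimalregularmoderationalcurves} combined with Theorem~\ref{cmpotentiallygoodreduction} rules it out.

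However, your claim that the Namikawa--Ueno types $[V]$ and $[V^*]$ each contain an elliptic component is incorrect. For these types the monodromy is an order-$6$ automorphism $\sigma$ of the smooth stable fiber $C_s$, and one checks (for instance by computing the action of $\sigma$ on $H^0(C_s,\Omega^1)$) that $\sigma$ has no eigenvalue~$1$ on $H^1$; equivalently, the quotient $C_s/\langle\sigma\rangle$ has genus~$0$. Hence the abelian rank of $Jac(C)/L$ is~$0$ for these types and the special fiber of the minimal regular model consists entirely of rational curves. Your proposed contradiction via ``abelian rank $=1$'' is therefore not available for $[V]$ and $[V^*]$, and this is exactly why the paper proceeds indirectly: it passes to the cubic subextension $N/L$ inside the degree-$6$ extension over which stable reduction is acquired, and shows via Liu's $r$-invariant that $C_N/N$ must have type $[II]$. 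Since $Jac(C_N)/N$ still has complex multiplication over $N$, the argument for $[II]$ then applies to give the contradiction. You need either this base-change step or some other genuine obstruction to handle $[V]$ and $[V^*]$.
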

\begin{proof}
If $C/L$ has reduction type $[II]$, then we see from \cite[Page 155]{namikawauenoclassification} that the special fiber of the minimal regular model of $C/L$ contains a curve of genus $1$. Therefore, Lemma \ref{minimalregularmoderationalcurves} implies that the Jacobian $Jac(C)/L$ cannot have purely additive reduction. However, this contradicts Theorem \ref{cmpotentiallygoodreduction}.

Assume now that $C/L$ has reduction type $[V]$ or $[V^*]$. We will find a contradiction. The idea is that after a cubic base extension, $C/L$ will acquire reduction of type $[II]$, so we get a contradiction by the previous paragraph. If $C/L$ has reduction of $[V]$ or $[V^*]$, then, since $p>5$, \cite[Table 1]{Liugenus2algorithm} tells us that $C/L$ acquires good reduction after a cyclic extension $M/L$ of degree $6$. Let now $N/L$ be the cubic field sub-extension of $M/L$. The base change $C_N/N$ acquires good reduction after an extension of degree $2$. Therefore, using \cite[Table 1]{Liugenus2algorithm}, we find that $C_N/N$ has either reduction of type $[II]$ or $[I_{0-0-0}^*]$. Let $r$ be the $r$-invariant defined by \cite[Théorème 1]{Liugenus2algorithm} corresponding to $C/L$ and let $r'$ be the the $r$-invariant defined by \cite[Théorème 1]{Liugenus2algorithm} corresponding to $C_N/N$. We will not introduce Liu's notation because we will only use it very briefly here, but the interested reader is referred to \cite{Liugenus2algorithm} or \cite[Section 5.1]{sadekminimalregularmodelsandtwists} for more information. It follows from \cite[Table 1]{Liugenus2algorithm} that $r \equiv 1 \text{ or } 5 \; (\text{mod } 6)$, because $n=6$ for $C/L$. Let $v_L$ and $v_N$ be the associated (normalized) valuations for $L$ and $N$ respectively. By looking at the expression for $r'$ and keeping in mind that $v_N|_L=3v_L$, we find that $r' \equiv 1\; (\text{mod } 2)$. Therefore, using \cite[Table 1]{Liugenus2algorithm}, we find that $C_N/N$ has reduction type $[II]$, which is a contradiction.
\end{proof}

We now prove Part $(i)$ of Theorem \ref{genus2jacobiangcurvegoodreduction}.

\begin{theorem}\label{potentiallygoodreductionproof}
Let $R$ be a complete discrete valuation ring with fraction field $L$ of characteristic $0$ and finite residue field $k_L$ of characteristic $p>5$. Let $C/L$ be a projective, smooth, and geometrically connected curve of genus $2$ with Jacobian $Jac(C)/L$ that has complex multiplication by a quartic CM field $K$ over the field $L$. Let $\mu'=|\mu(K)|$, where $\mu(K)$ are the roots of unity in $K$, and let $L^{unr}$ be the maximal unramified extension of $L$. Assume that $C/L$ has potentially good reduction. Then
\begin{enumerate}
    \item If $\mu'=2$, then $C_{L^{unr}}/L^{unr}$ can only have reduction of type $[I_{0-0-0}^*]$ or good reduction $[I_{0-0-0}]$.
    \item If $\mu'=4$, then $C_{L^{unr}}/L^{unr}$ can only have reduction of type $[I_{0-0-0}^*]$, $[VI]$, or good reduction $[I_{0-0-0}]$.
    \item If $\mu'=6$, then $C_{L^{unr}}/L^{unr}$ can only havr reduction of type $[I_{0-0-0}^*]$, $[III]$, $[IV]$, or good reduction $[I_{0-0-0}]$.
    \item If $\mu'=8$, then $C_{L^{unr}}/L^{unr}$ can only have reduction of type $[I_{0-0-0}^*]$, $[VI]$, $[VII]$, $[VII^*]$, or good reduction $[I_{0-0-0}]$.
    \item If $\mu'=10$, then $C_{L^{unr}}/L^{unr}$ can only have reduction of type $[I_{0-0-0}^*]$, $[IX-1]$, $[IX-2]$, $[IX-3]$, $[IX-4]$, $[VIII-1]$, $[VIII-2]$,$[VIII-3]$, $[VIII-4]$ or good reduction $[I_{0-0-0}]$.
    \item If $\mu'=12$, then $C_{L^{unr}}/L^{unr}$ can only have reduction of type $[I_{0-0-0}^*]$, $[III]$, $[IV]$, $[VI]$, or good reduction $[I_{0-0-0}]$.
\end{enumerate}
\end{theorem}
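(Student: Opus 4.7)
The plan is to combine Theorem \ref{cmpotentiallygoodreduction}, Lemma \ref{lemmarootsofunity}, and Liu's explicit algorithm from \cite{Liugenus2algorithm}. By Theorem \ref{cmpotentiallygoodreduction}, the Jacobian $Jac(C)/L$ has either good or purely additive reduction; in the former case $C_{L^{unr}}/L^{unr}$ has type $[I_{0-0-0}]$, which appears in every row of the table, so I would concentrate on the purely additive case. Since $C/L$ has potentially good reduction, the stable model is smooth, so the minimal extension of $L^{unr}$ over which $C$ acquires good reduction coincides (by the N\'eron--Ogg--Shafarevich criterion) with the minimal extension over which $Jac(C)$ does. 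Lemma \ref{lemmarootsofunity} then provides an extension $M/L^{unr}$ of degree dividing $\mu'$ over which $Jac(C)$ has good reduction, so the invariant $n := [M_{\min} : L^{unr}]$ divides $\mu'$.

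Since $p > 5$, every such extension is tame and Liu's algorithm applies. For each potentially good reduction type in the Namikawa--Ueno classification, \cite[Table 1]{Liugenus2algorithm} records the associated value of $n$: namely $n=1$ for $[I_{0-0-0}]$; $n=2$ for $[I_{0-0-0}^*]$ and $[II]$; $n=3$ for $[IV]$; $n=4$ for $[VI]$; $n=5$ for the family $[IX\text{-}i]$; $n=6$ for $[III]$, $[V]$, and $[V^*]$; $n=8$ for $[VII]$ and $[VII^*]$; and $n=10$ for the family $[VIII\text{-}i]$. The constraint $n \mid \mu'$ therefore restricts the possible reduction types once $\mu'$ is fixed.

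For each of the six values of $\mu'$ I would enumerate the divisors, read off the candidate types from the above correspondence, and then remove those forbidden by Lemma \ref{excludetypes} (namely $[II]$, $[V]$, and $[V^*]$). For instance, when $\mu'=6$ the divisors $\{1,2,3,6\}$ leave the candidates $[I_{0-0-0}]$, $[I_{0-0-0}^*]$, $[II]$, $[IV]$, $[III]$, $[V]$, $[V^*]$, and Lemma \ref{excludetypes} trims this to the stated list. The cases $\mu' = 2, 4, 8, 10$ are handled identically. For $\mu' = 12$, the divisors $\{1,2,3,4,6,12\}$ could in principle admit $n = 12$ types; these are ruled out by Theorem \ref{theoremlorenzini1990}, which forces $[M_{\min}:L^{unr}] \le 10$, leaving the desired list. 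Lemma \ref{minimalregularmoderationalcurves} is also available to confirm that none of the surviving types have non-rational components in the special fiber, serving as a sanity check for the purely additive case.

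The main obstacle will be the careful bookkeeping required to match each value of $n$ in Liu's table with the correct set of reduction types, and to be certain that no potentially good reduction type in the Namikawa--Ueno classification has been overlooked for a given $n$. Once this correspondence is pinned down, the divisibility $n \mid \mu'$, together with Lemma \ref{excludetypes} and (for $\mu'=12$) Theorem \ref{theoremlorenzini1990}, delivers all six cases in a uniform fashion.
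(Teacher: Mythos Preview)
Your approach is essentially identical to the paper's: both arguments use Lemma \ref{lemmarootsofunity} (the paper pairs it with \cite[Theorem (2.4)]{delignemumford}) to force $n=[M:L^{unr}]\mid\mu'$, read off the candidate types for each $n$ from \cite[Table 1]{Liugenus2algorithm}, prune with Lemma \ref{excludetypes}, and invoke Theorem \ref{theoremlorenzini1990} to rule out $n=12$ when $\mu'=12$.

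One bookkeeping slip to fix: you have interchanged $[III]$ and $[IV]$. In Liu's Table 1 it is $[III]$ that corresponds to $n=3$ and $[IV]$ (together with $[V]$, $[V^*]$) that corresponds to $n=6$, not the reverse. By luck this does not alter any of the six final lists, since for every $\mu'\in\{2,4,6,8,10,12\}$ one has $3\mid\mu'$ if and only if $6\mid\mu'$; but it is precisely the kind of error you yourself flagged as the main obstacle, so it is worth correcting before writing the proof out in full.
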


\begin{proof}
Let $M/L^{unr}$ be the extension of minimal degree over which $C_{L^{unr}}/L^{unr}$ acquires stable reduction, which is provided by Theorem \ref{theoremextension}. By \cite[Theorem (2.4)]{delignemumford} and Lemma \ref{lemmarootsofunity} we find that $[M : L^{unr} ]$ divides $\mu'=|\mu(K)|$. The idea of the proof is to use this divisibility combined with Liu's algorithm \cite{Liugenus2algorithm} and Lemma \ref{excludetypes} to compute the possible reduction types of $C_{L^{unr}}/L^{unr}$.

Since we assume that $C/L$ has potentially good reduction, it follows from \cite[Proposition 3]{liu1993} that the stable model $\mathcal{C}^{st}$ has good reduction. Moreover, since $p>5$, the extension $M/L^{unr}$ is tame (see \cite[Proposition 4.1.2]{Liugenus2algorithm}). Therefore, by \cite[Table 1]{Liugenus2algorithm} we find that the possible reduction types of $C_{L^{unr}}/L^{unr}$ are as follows

\begin{center}
\begin{tabular}{|c| c| }
\hline 
  Degree of $M/L^{unr}$  & Possible reduction types of $C_{L^{unr}}/L^{unr}$  \\
    \hline 
    1 & $[I_{0-0-0}]$ \\\hline
    2 & $[I_{0-0-0}^*]$, $[II]$ \\\hline
    3 & $[III]$\\\hline
    4 & $[VI]$\\\hline
    5 & $[IX-1]$, $[IX-2]$, $[IX-3]$, $[IX-4]$\\\hline
    6 & $[IV]$, $[V]$, $[V^*]$\\\hline
    8 & $[VII]$, $[VII^*]$\\\hline
    10 & $[VIII-1]$, $[VIII-2]$,$[VIII-3]$, $[VIII-4]$\\
    \hline 

    \hline
\end{tabular}
 \end{center}

{\it Proof of part $(i)$:} If $\mu'=2$, then we see that $[M : L^{unr} ]$ is either $1$ or $2$. Therefore, the curve $C_{L^{unr}}/L^{unr}$ has either reduction of type $[I_{0-0-0}]$, $[I_{0-0-0}^*]$, or $[II]$. However, Lemma \ref{excludetypes} excludes the case of type $[II]$.

{\it Proof of part $(ii)$:} If $\mu'=4$, then we see that $[M : L^{unr} ]$ divides $4$. If $[M: L^{unr}] \leq 2$, then we find that the reduction types of $C_{L^{unr}}/L^{unr}$ are the types that appear in part $(i)$. On the other hand, if $[M: L^{unr}] = 4$, then $C_{L^{unr}}/L^{unr}$ has reduction of type $[VI]$.

{\it Proof of part $(iii)$:} If $\mu'=6$, then we see that $[M : L^{unr} ]$ divides $6$. Therefore, $[M : L^{unr} ]=1,2,3,$ or $6$. Using the table above we find that $C_{L^{unr}}/L^{unr}$ has reduction of type $[I_{0-0-0}^*]$, $[II]$, $[III]$, $[IV]$, $[V]$, or $[V^*]$. However, using Lemma \ref{excludetypes}, we find that the types $[II]$, $[V]$, and $[V^*]$ cannot occur.

{\it Proof of part $(iv)$:} If $\mu'=8$, then we see that $[M : L^{unr} ]$ divides $8$. If $[M: L^{unr}]$ divides $4$,  then we find that the reduction types of $C_{L^{unr}}/L^{unr}$ are the types that appear in part $(ii)$. On the other hand, if $[M : L^{unr}] = 8$, then we find that $C_{L^{unr}}/L^{unr}$ has reduction of type $[VII]$ or $[VII^*]$.

{\it Proof of part $(v)$:} If $\mu'=10$, then we see that $[M : L^{unr} ]$ divides $10$. If $[M : L^{unr}]=1$ or $2$, then $C_{L^{unr}}/L^{unr}$ has reduction of type $[I_{0-0-0}^*]$ or $[I_{0-0-0}]$. On the other hand, if $[M : L^{unr}]=5$ or $10$, then we find that $C_{L^{unr}}/L^{unr}$ has reduction of type $[IX-1]$, $[IX-2]$, $[IX-3]$, $[IX-4]$, $[VIII-1]$, $[VIII-2]$,$[VIII-3]$, $[VIII-4]$.

{\it Proof of part $(vi)$:} If $\mu'=12$, the we see that $[M : L^{unr} ]$ divides $12$. Since $[M : L^{unr} ] \leq 10$ by Theorem \ref{theoremlorenzini1990}, we find that $[M : L^{unr} ] = 1,2,3,4,$ or $6$. If $[M: L^{unr} ]$ divides $6$, then we find that the reduction types of $C_{L^{unr}}/L^{unr}$ are the types that appear in part $(iii)$. On the other hand, if $[M : L^{unr} ]=4$, then $C_{L^{unr}}/L^{unr}$ can only have reduction of type $[VI]$. This proves our theorem.
\end{proof}

\begin{emptyremark}\label{c0c1}
The following two curves, $C_0/k$ and $C_1/k$, will play a special role in Theorem \ref{theorem4.5} below. Let $k$ be an algebraically closed field of characteristic $p>5$. Let $C_0/k$ be the smooth projective geometrically connected curve given by the following affine equation $$C_0 \; : \; y^2=x^5-1.$$
Moreover, let $C_1/k$ be the smooth projective geometrically connected curve given by the following affine equation $$C_1 \; : \; y^2=x^5-x.$$
\end{emptyremark}

We now show that under some extra assumptions on the special fiber of the stable model of $C/L$, we can achieve more precise results.

\begin{theorem}\label{theorem4.5}
Keep the same assumptions and notation as in Theorem \ref{potentiallygoodreductionproof} and assume in addition that the special fiber $C_s$ of $\mathcal{C}^{st}$ is not isomorphic to either the curve $C_0/k$ or the curve $C_1/k$ of \ref{c0c1}. Then
\begin{enumerate}
    \item If $\mu'=8$, then $C_{L^{unr}}/L^{unr}$ can only have reduction of type $[I_{0-0-0}^*]$, $[VI]$, or good reduction $[I_{0-0-0}]$.
    \item If $\mu'=10$, then $C_{L^{unr}}/L^{unr}$ can only have reduction of type $[I_{0-0-0}^*]$ or good reduction $[I_{0-0-0}]$.
\end{enumerate}
\end{theorem}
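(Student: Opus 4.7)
The plan is to leverage the classification of smooth genus $2$ curves with large cyclic automorphism groups in characteristic $p>5$. Specifically, under the hypotheses of Theorem \ref{potentiallygoodreductionproof}, the cyclic Galois group $\text{Gal}(M/L^{unr})$ must act faithfully by automorphisms on the special fiber $C_s$ of the stable model $\mathcal{C}^{st}$, which is a smooth genus $2$ curve because $C/L$ has potentially good reduction. Looking at the degree table in the proof of Theorem \ref{potentiallygoodreductionproof}, the reduction types we wish to exclude---$[VII]$ and $[VII^*]$ (when $\mu'=8$) and $[IX\text{-}j],[VIII\text{-}j]$ for $j=1,2,3,4$ (when $\mu'=10$)---force $[M:L^{unr}]$ to equal $8$, $5$, and $10$ respectively. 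Hence $C_s$ would carry a cyclic automorphism of order $5$, $8$, or $10$, and we will derive a contradiction in each case using the uniqueness of the curves $C_0$ and $C_1$.

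The first step is to establish that $\text{Gal}(M/L^{unr})$ is cyclic and acts faithfully on $C_s$. Since $L^{unr}$ has algebraically closed residue field, every finite extension of $L^{unr}$ is totally ramified, and because $p>5$ with $[M:L^{unr}]\leq 10$ the extension is tame; hence $\text{Gal}(M/L^{unr})$ is cyclic of order $n=[M:L^{unr}]$. Uniqueness of the stable model gives a natural action of $\text{Gal}(M/L^{unr})$ on $\mathcal{C}^{st}$, and the minimality property of $M$ in Theorem \ref{theoremextension} implies that this action is faithful: any element acting trivially would allow $\mathcal{C}^{st}$ to descend to a proper subextension of $M/L^{unr}$ over which $C_{L^{unr}}/L^{unr}$ would already possess stable reduction, contradicting the minimality of $M$. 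Since the special fiber is smooth, this faithful action is detected on $C_s$, so $C_s$ admits an automorphism of order exactly $n$.

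The second step is to invoke the classical Bolza--Igusa classification of automorphism groups of smooth genus $2$ curves, which remains valid in characteristic $p>5$. Up to isomorphism over an algebraically closed field of characteristic $p>5$, the curve $C_1:y^2=x^5-x$ is the unique smooth genus $2$ curve carrying an automorphism of order $8$, while $C_0:y^2=x^5-1$ is the unique one carrying an automorphism of order $5$ (equivalently of order $10$, since the hyperelliptic involution commutes with every automorphism and composes with an order-$5$ automorphism to produce one of order $10$). Combining this with the standing assumption $C_s\not\cong C_0,C_1$ contradicts the automorphism produced in the previous step and excludes each of the targeted reduction types; part $(i)$ trims $[VII]$ and $[VII^*]$ from the list in Theorem \ref{potentiallygoodreductionproof}(iv), while part $(ii)$ eliminates the $[IX\text{-}j]$ and $[VIII\text{-}j]$ types from Theorem \ref{potentiallygoodreductionproof}(v).

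The main obstacle I expect will be the faithfulness step: one must rule out the possibility that a nontrivial element of $\text{Gal}(M/L^{unr})$ acts trivially on $\mathcal{C}^{st}$, and this requires a standard but careful descent/rigidity argument (translate ``acts trivially'' into ``the stable model descends to a proper subextension'', then invoke minimality of $M$). Once this is in place, the argument reduces to a direct citation of the Bolza--Igusa classification together with the bookkeeping already performed in Theorem \ref{potentiallygoodreductionproof}.
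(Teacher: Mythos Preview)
Your proposal is correct and rests on the same underlying mathematics as the paper's proof, but the packaging differs. The paper simply invokes \cite[Corollaire 4.1]{liu1993}, which states that when $C/L$ has potentially good reduction and $C_s\not\cong C_0,C_1$, the degree $[M:L^{unr}]$ divides $4$ or $6$; combining this divisibility with $[M:L^{unr}]\mid\mu'$ immediately forces $[M:L^{unr}]\in\{1,2,4\}$ (for $\mu'=8$) or $[M:L^{unr}]\in\{1,2\}$ (for $\mu'=10$), and the conclusion follows from the table in Theorem \ref{potentiallygoodreductionproof}. Your argument is essentially a reproof of Liu's Corollaire 4.1: the faithful cyclic action of $\text{Gal}(M/L^{unr})$ on $C_s$ together with the Bolza--Igusa classification is exactly how that result is established. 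So the two proofs are the same at the level of ideas; the paper just outsources the automorphism-group analysis to Liu, while you spell it out. One small point: your passage from ``faithful on $\mathcal{C}^{st}$'' to ``faithful on $C_s$'' deserves a word of justification (e.g., that the automorphism functor of a stable curve is unramified, so an automorphism trivial on the special fiber is trivial), but this is standard and you correctly flag the faithfulness step as the place requiring care.
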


\begin{proof}
 Let $M/L^{unr}$ be the extension of minimal degree over which $C_{L^{unr}}/L^{unr}$ acquires stable reduction, which is provided by Theorem \ref{theoremextension}. Since $C/L$ has potentially good reduction, it follows from \cite[Corollaire 4.1]{liu1993} that $[M:L^{unr}]$ divides $4$ or $6$, except if $C_s \cong C_0, C_1$, where $C_s$ is the stable reduction of $C/L$.
 
 {\it Proof of part $(i)$:} Assume that $\mu'=8$. Lemma \ref{lemmarootsofunity} implies that $[M : L^{unr} ]$ divides $8$. Since $[M:L^{unr}]$ also divides $4$ or $6$, we find that $[M : L^{unr} ]=1,2,$ or $4$. Therefore, proceeding in a similar way as in the proof of part $(ii)$ of Theorem \ref{potentiallygoodreductionproof}, we find that $C_{L^{unr}}/L^{unr}$ can only have reduction of type $[I_{0-0-0}^*]$, $[VI]$, or good reduction $[I_{0-0-0}]$.
 
 {\it Proof of part $(ii)$:} Assume that $\mu'=10$. Lemma \ref{lemmarootsofunity} implies that $[M : L^{unr} ]$ divides $10$. Since $[M:L^{unr}]$ also divides $4$ or $6$, we find that $[M : L^{unr} ]=1$ or $2$. Therefore, proceeding in a similar way as in the proof of part $(i)$ of Theorem \ref{potentiallygoodreductionproof}, we find that $C_{L^{unr}}/L^{unr}$ can only have reduction of type $[I_{0-0-0}^*]$ or good reduction $[I_{0-0-0}]$. This proves our theorem.
\end{proof}

We now consider the possible types that can occur when $C/L$ does not have potentially good reduction. 

\begin{emptyremark}\label{remarkdegreeofsingularity}
Let $R$ be a complete discrete valuation ring with valuation $v_L$, fraction field $L$ of characteristic $0$, and algebraically closed residue field $k_L$ of characteristic $p>5$. Let $C/L$ be a projective, smooth, and geometrically connected curve of genus $2$ with simple Jacobian $Jac(C)/L$ that has complex multiplication by a quartic CM field $K$ over the field $L$. Assume also that $C/L$ does not have potentially good reduction. Let $M/L$ be the extension of minimal degree over which $C/L$ acquires stable reduction, which exits by Theorem \ref{theoremextension}. Recall that we denote by $\mathcal{C}^{st}$ the stable model of $C_M/M$. It follows that the special fiber of the stable model $\mathcal{C}^{st}$ is a union of two elliptic curves $E_1$ and $E_2$ intersecting at a point, see the paragraph before \cite[Proposition 2]{liu1993} and note that the Jacobian of $C/L$ has potentially good reduction. Let $d_L:=v_L(J_{10}J_2^{-5})/12$, where $J_{10}$ and $J_2$ are the (Igusa) $J_{10}$- and $J_2$-invariants associated to $C/L$, see \cite[Section 2.2]{Liugenus2algorithm} for the relevant definitions. The number $d:=[M:L]d_L$ is called the degree of singularity of the point of intersection $E_1 \cap E_2$ in $\mathcal{C}^{st}$. We note that this $d$ is the same $d$ that appears in the reduction type of the second part of Theorem \ref{genus2jacobiangcurvegoodreduction} (as well as in its restatement below).
\end{emptyremark}

Before we proceed to the proof of the second part of Theorem \ref{genus2jacobiangcurvegoodreduction}, we need to prove a lemma that will significantly simplify our proof. We note that in the lemma and the theorem below the number $r$ corresponds to the $r$-invariant defined in \cite[Théorème 3]{Liugenus2algorithm}.

\begin{lemma}\label{excludetypesII}
Let $R$ be a complete discrete valuation ring with (normalized) valuation $v_L$, with fraction field $L$ of characteristic $0$, and algebraically closed residue field $k_L$ of characteristic $p>5$. Let $C/L$ be a projective, smooth, and geometrically connected curve of genus $2$ with Jacobian $Jac(C)/L$ that has complex multiplication over the field $L$. 
\begin{enumerate}
    \item The curve $C/L$ cannot have any of the following reduction types

\begin{center}
\begin{tabular}{|c| c| c |}
\hline
  $[I_0-I_0-d]$ & $[I_0-I_0^*-(d-1)/2]$ & $[2I_0-r]$ \\
    \hline
    $[I_0-IV-(d-1)/3]$ & $[I_0-IV^*-(d-2)/3]$ & $[I_0-III-(d-1)/4]$\\
    \hline 
    $[I_0-III^*-(d-3)/4]$ & $[I_0^*-III^*-(d-5)/4]$ & $[I_0^*-III-(d-3)/4]$\\
    \hline
    $[2IV-(r-1)/3]$ & $[2IV^*-(r-2)/3]$ & $[I_0-II-(d-1)/6]$ \\
    \hline
    $[I_0-II^*-(d-5)/6]$ & $[I_0^*-IV^*-(d-7)/6]$ & $[I_0^*-IV-(d-5)/6]$ \\
    \hline
    $[II^*-IV-(d-7)/6]$ & $[II-IV-(d-3)/6]$ & $[II-IV^*-(d-5)/6]$\\
    \hline
    $[II^*-IV^*-(d-9)/6]$ &  & \\
    \hline
\end{tabular}
    
\end{center} 

\item If $C/L$ acquires semi-stable reduction after an extension of degree $6$ or $12$, then $2 \nmid v_L(J_2)$ except if $C/L$ has reduction type $[II-II-(d-2)/6]$, $[II-II^*-(d-6)/6]$, $[II^*-II^*-(d-10)/6]$, $[I_0^*-II-(d-4)/6]$, or $[I_0^*-II^*-(d-8)/6]$.
\end{enumerate}
\end{lemma}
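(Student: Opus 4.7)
My plan is to address Part (1) by combining the reduction classification for CM abelian varieties (Theorem \ref{cmpotentiallygoodreduction}) with a structural analysis of the stable fiber. First, since $Jac(C)/L$ is an abelian variety with complex multiplication over $L$ (hence isotypic), applying Theorem \ref{cmpotentiallygoodreduction} to a simple factor yields that $Jac(C)/L$ has either good or purely additive reduction over $L$. In the good reduction case, $C/L$ itself has good reduction (the only possible type being $[I_{0-0-0}]$), which is not among the excluded types, so I may assume $Jac(C)/L$ has purely additive reduction. Lemma \ref{minimalregularmoderationalcurves} then implies that the minimal proper regular model of $C/L$ contains only rational components. Consulting the Namikawa--Ueno classification \cite{namikawauenoclassification} shows that every excluded type of the form $[I_0-X_2-k]$, as well as $[2I_0-r]$, possesses an elliptic component of Kodaira type $I_0$ in its special fiber, giving an immediate contradiction. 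This rules out all excluded types that involve an $I_0$ factor.

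For the remaining excluded types, namely those not containing an $I_0$ factor (such as $[I_0^*-III-(d-3)/4]$, $[I_0^*-IV-(d-5)/6]$, $[II-IV-(d-3)/6]$, and the ``swap'' types $[2IV-(r-1)/3]$, $[2IV^*-(r-2)/3]$), I plan to exploit the compatibility of the CM action with the Galois action on the stable fiber. Let $M/L$ be the extension of minimal degree over which $C/L$ acquires stable reduction. By Lemma \ref{lemmarootsofunity}, $[M:L]$ divides $\mu'$ and the inertia action on $Jac(C)_M$ factors through $\mu(K)$. The stable model's special fiber is a union of two elliptic curves $E_1$ and $E_2$ meeting at a single point, and $\text{Gal}(M/L)$ acts on this configuration commuting with the $K$-action. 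The key observation is that, because $K$ is a field embedded in $\text{End}^0$ of the stable Jacobian $E_1\times E_2$, the image of $\mu(K)$ lies in the centralizer of $K$, which forces the inertia characters on $E_1$ and $E_2$ to have matching orders (the common order being the order of a generator of $\text{Gal}(M/L)$). This matching rules out all of the listed mixed types, and a separate verification that the swap of $E_1$ and $E_2$ does not centralize the image of $K$ in $\text{End}^0(E_1\times E_2)$ rules out $[2IV-(r-1)/3]$ and $[2IV^*-(r-2)/3]$.

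For Part (2), the approach is a direct computation using Liu's algorithm. Given the reduction type and its associated numerical invariants in \cite[Table 1]{Liugenus2algorithm}, there is an explicit formula for $v_L(J_2)$ in a minimal Weierstrass equation in terms of $r$ (or $d$) and the Kodaira pieces of the reduction. For every type corresponding to $[M:L]\in\{6,12\}$ other than the five exceptions $[II-II-(d-2)/6]$, $[II-II^*-(d-6)/6]$, $[II^*-II^*-(d-10)/6]$, $[I_0^*-II-(d-4)/6]$, $[I_0^*-II^*-(d-8)/6]$, the formula forces $v_L(J_2)$ to be odd; for the five exceptions the parity is not constrained in the same way and can be even.

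I expect the main obstacle to be the CM/Galois compatibility argument of Part (1). It must be carried out uniformly in two sub-cases: the simple Jacobian case (primitive CM type), and the non-simple biquadratic case in which $Jac(C)$ becomes isogenous over an algebraic closure to $E^2$ for an elliptic curve $E$ with CM by an imaginary quadratic subfield of $K$. The latter case requires careful bookkeeping of the embedding $K\hookrightarrow \text{End}^0(E^2)=M_2(\text{End}^0(E))$ and of the action of $\mu(K)$ through this embedding, with an additional technical wrinkle when $E$ has supersingular reduction and $\text{End}^0(E)$ is a quaternion algebra rather than an imaginary quadratic field.
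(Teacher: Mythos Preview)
Your approach differs substantially from the paper's. The paper argues entirely by base-change to intermediate extensions: after directly excluding the five ``basic'' types $[I_0-I_0-d]$, $[I_0-I_0^*-\cdot]$, $[2I_0-\cdot]$, $[I_0-IV-\cdot]$, $[I_0-IV^*-\cdot]$ (whose special fibre contains a genus-$1$ component, contradicting purely-additive reduction of $Jac(C)$ via Theorem~\ref{cmpotentiallygoodreduction}), it handles every remaining type by passing to a quadratic or cubic sub-extension $N\subset M$, tracking $d$ (or $v_L(J_2)$) through $v_N|_L=[N:L]\,v_L$, and reading off from Liu's tables that $C_N/N$ falls into one of those five basic types. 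Part~(ii) is proved the same way (cubic base change, then invoke Part~(i)). No structural analysis of the $K$-action on $E_1\times E_2$ appears. Your ``matching orders'' idea does give a valid and conceptually cleaner alternative for the non-swap mixed types: since $\bar\iota(K)\subset\mathrm{End}^0(E_1\times E_2)$ is a field and $\bar\iota(\zeta)=\mathrm{diag}(a_1,a_2)$, an inequality $\mathrm{ord}(a_1)\neq\mathrm{ord}(a_2)$ would make $\bar\iota(\zeta^{\min_i\mathrm{ord}(a_i)}-1)$ a nonzero zero-divisor.

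Two steps in your plan are not right as stated. For the swap types $[2IV]$, $[2IV^*]$, you propose to verify that the swap ``does not centralize the image of $K$''; but the inertia generator $\sigma$ \emph{equals} $\bar\iota(\zeta)$ and therefore trivially centralizes $\bar\iota(K)$, so there is nothing to contradict. (A correct argument in your framework: $\sigma\in\bar\iota(K)$ must satisfy the quadratic minimal polynomial $x^2-x+1$ of $\zeta_6$, and an anti-diagonal matrix cannot.) For Part~(2), the claim that a direct formula forces $v_L(J_2)$ to be odd for every non-exceptional type with $[M:L]\in\{6,12\}$ is false: the eight non-swap excluded types with $n=6$ (e.g.\ $[I_0^*-IV-\cdot]$, $[II-IV-\cdot]$) lie in Liu's Table~3.1, where $2\mid v_L(J_2)$. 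Part~(ii) holds for them only because Part~(i) already forbids them, so Part~(ii) cannot be decoupled from Part~(i) by a parity computation alone; the paper's proof explicitly reduces to Part~(i) after base change. Separately, your opening reduction ``$Jac(C)$ good $\Rightarrow$ $C$ good'' is false in general --- type $[I_0-I_0-d]$ with both $E_i$ smooth has $Jac(C)$ of good reduction.
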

\begin{proof}
{\it Proof of part $(i)$:}
If $C/L$ has reduction of type $[I_0-I_0-d]$, $[I_0-I_0^*-(d-1)/2]$, $[2I_0-r]$, $[I_0-IV-(d-1)/3]$, or $[I_0-IV^*-(d-2)/3]$ then we see from \cite{namikawauenoclassification} that the special fiber of the minimal regular model of $C/L$ contains a smooth curve of genus $1$. Therefore, Lemma \ref{minimalregularmoderationalcurves} implies that the Jacobian $Jac(C)/L$ cannot have purely additive reduction. However, this contradicts Theorem \ref{cmpotentiallygoodreduction}.

Assume that $C/L$ has reduction of type $[I_0-III-(d-1)/4]$, $[I_0-III^*-(d-3)/4]$, $[I_0^*-III^*-(d-5)/4]$, or $[I_0^*-III-(d-3)/4]$, and we will find a contradiction. We know from \cite[Table 3.1]{Liugenus2algorithm} that $C/L$ acquires stable reduction after a cyclic extension $M/L$ of degree $4$. Let now $N/L$ be a quadratic field extension contained in $M/L$. The base change $C_N/N$ acquires stable reduction after an extension of degree $2$. Let $d$ be the integer defined in \ref{remarkdegreeofsingularity} computed with respect to the field $L$. Looking at \cite[Table 3.1]{Liugenus2algorithm}, we find that $d \equiv 1 \text{ or } 3 \; (\text{mod } 4)$.  Let $v_L$ and $v_N$ are the corresponding valuations for $L$ and $N$ respectively. Since $v_N|_L=2v_L$ and $[M:N]=[M:N]/2$, we see that the value of $d$ remains the same when computed over $N$. Therefore, using \cite[Table 3.1]{Liugenus2algorithm}, we see that $C_N/N$ has reduction $[I_0-I_0^*-(d-1)/2]$, because $d \equiv 1 \; (\text{mod } 2)$. This contradicts the previous paragraph because $Jac(C_N)/N$ has complex multiplication over $N$.

Assume that $C/L$ has reduction of type $[2IV-(r-1)/3]$ or $[2IV^*-(r-2)/3]$, and we will find a contradiction. Let $v_L$ be the (normalized) valuation of $L$ and let $J_2$ be the $J_2$-invariant associated to $C/L$ (see \cite[Section 2.2]{Liugenus2algorithm}). Since $C/L$ has reduction of type $[2IV-(r-1)/3]$ or $[2IV^*-(r-2)/3]$, we know that $2 \nmid v_L(J_2)$ by \cite[Table 3.2]{Liugenus2algorithm}. Moreover, \cite[Table 3.1]{Liugenus2algorithm} tells us that $C/L$ acquires good reduction after a cyclic extension $M/L$ of degree $6$. Let now $N/L$ be a cubic extension contained in $M/L$ and denote by $v_N$ the corresponding valuation. Since $v_N|_L=3v_L$, we see that $2 \nmid v_N(J_2)$. Therefore, $C_N/N$ must have reduction of type $[2I_0-r]$, by \cite[Table 3.2]{Liugenus2algorithm}. This is a contradiction, by Lemma \ref{excludetypesII}, because $Jac(C_N)/N$ has complex multiplication over $N$.

Assume that $C/L$ has reduction of type  $[I_0-II-(d-1)/6]$,  $[I_0-II^*-(d-5)/6]$, $[I_0^*-IV^*-(d-7)/6]$, $[I_0^*-IV-(d-5)/6]$, $[II^*-IV-(d-7)/6]$, $[II-IV-(d-3)/6]$, $[II-IV^*-(d-5)/6]$, or $[II^*-IV^*-(d-9)/6]$, and we will find a contradiction. We know from \cite[Table 3.1]{Liugenus2algorithm} that $C/L$ acquires good reduction after a cyclic extension $M/L$ of degree $6$. Let $d$ be the integer defined in \ref{remarkdegreeofsingularity} with respect to $L$. Looking at \cite[Table 3.1]{Liugenus2algorithm}, we find that $d \equiv 1,3, \text{ or } 5 \; (\text{mod } 6 )$. Let now $N/L$ be a cubic extension contained in $M/L$ and denote by $v_N$ the corresponding valuation. Since $v_N|_L=3v_L$ and $[M:N]=[M:N]/3$, we see that the value of $d$ remains the same when computed over $N$. Therefore, using \cite[Table 3.1]{Liugenus2algorithm}, we see that $C_N/N$ has reduction $[I_0-I_0^*-(d-1)/2]$, because $d \equiv 1 \; (\text{mod } 2)$. Finally, using Lemma \ref{excludetypesII} we find a contradiction because $Jac(C_N)/N$ has complex multiplication over $N$.

{\it Proof of part $(ii)$:}
Assume that $2 \mid v_L(J_2)$ and that $C/L$ does not have reduction type $[II-II-(d-2)/6]$, $[II-II^*-(d-6)/6]$, $[II^*-II^*-(d-10)/6]$, $[I_0^*-II-(d-4)/6]$, or $[I_0^*-II^*-(d-8)/6]$. We proceed by contradiction. Let $M/L$ be the extension of minimal degree over which $C/L$ acquires stable reduction. Note that the special fiber of the stable model of $C_M/M$ is a union of two elliptic curves meeting at one point (see the paragraph right before \cite[Proposition 2]{liu1993}). By assumption $[M:L]=6$ or $12$. Let now $N/L$ be the extension of degree $3$ contained in $M/L$ and denote by $v_N$ the corresponding valuation. Then $C_N/N$ acquires good reduction after an extension of degree $[M:L]/3$ and $2 \mid v_N(J_2)$. Let $d$ be the integer defined in Paragraph \ref{remarkdegreeofsingularity} with respect to $L$. It follows from \cite[Table 3.1]{Liugenus2algorithm} that either $d \equiv 1,3, \text{ or } 5 \; (\text{mod } 6 )$ or $d \equiv 1,3, \text{ or } 5 \; (\text{mod } 6 )$. The curve $C_N/N$ has stable reduction after an extension of degree $2$ or $4$ while $d$ is odd. Let $v_N$ be the valuation of $N$. Since $v_N|_L=([M:L]/3)v_L$ and $[M:N]=3$, we see that the value of $d$ remains the same when computed over $N$.  Therefore, using \cite[Table 3.1]{Liugenus2algorithm}, we see that $C_N/N$ has reduction type $[I_0-I_0^*-(d-1)/2]$, $[I_0-III-(d-1)/4]$, $[I_0-III^*-(d-3)/4]$, $[I_0^*-III-(d-3)/4]$, or $[I_0^*-III^*-(d-5)/4]$. This is a contradiction by Lemma \ref{excludetypesII} because $Jac(C_N)$ has complex multiplication over the field $N$. This proves our lemma.
\end{proof}

We now prove the second part of Theorem \ref{genus2jacobiangcurvegoodreduction}. For the convenience of the reader we have repeated the statement to be proved.

\begin{theorem}\label{notpotentiallygoodgenus2}
Let $R$ be a complete discrete valuation ring with fraction field $L$ of characteristic $0$ and finite residue field $k_L$ of characteristic $p>5$. Let $C/L$ be a projective, smooth, and geometrically connected curve of genus $2$ with Jacobian $Jac(C)/L$ that has complex multiplication by a quartic CM field $K$ over the field $L$.  Let $\mu'=|\mu(K)|$, where $\mu(K)$ is the group of roots of unity in $K$. Assume that $C/L$ does not have potentially good reduction. Then
\begin{enumerate}[resume]
    \item If $\mu'=2$ or $10$, then $C_{L^{unr}}/L^{unr}$ can only have reduction of type $[I_0^*-I_0^*-(d-2)/2]$.
    \item If $\mu'=4$, then $C_{L^{unr}}/L^{unr}$ can only have reduction of type $[I_0^*-I_0^*-(d-2)/2]$, $[III-III-(d-2)/4]$, $[III-III^*-(d-4)/4]$, $[III^*-III^*-(d-6)/4]$, or $[2I_0^*-(r-1)/2]$.
    \item If $\mu'=6$, then $C_{L^{unr}}/L^{unr}$ can only have reduction of type $[I_0^*-I_0^*-(d-2)/2]$, $[IV-IV-(d-2)/3]$, $[IV-IV^*-(d-3)/3]$, $[IV^*-IV^*-(d-4)/3]$, $[II-II-(d-2)/6]$, $[II-II^*-(d-6)/6]$, $[II^*-II^*-(d-10)/6]$, $[I_0^*-II-(d-4)/6]$,
    $[I_0^*-II^*-(d-8)/6]$, $[2IV-(r-1)/3]$, or $[2IV^*-(r-2)/3]$.
    \item If $\mu'=8$, then $C_{L^{unr}}/L^{unr}$ can only have reduction of type $[I_0^*-I_0^*-(d-2)/2]$, $[III-III-(d-2)/4]$, $[III-III^*-(d-4)/4]$, $[III^*-III^*-(d-6)/4]$, $[2I_0^*-(r-1)/2]$, $[2III-(r-1)/4]$, $[2III^*-(r-3)/4]$.
    \item If $\mu'=12$, then the possible reduction types of $C_{L^{unr}}/L^{unr}$ are given by the following table 
    \begin{center}
\begin{tabular}{|c| c| c |}
\hline
  $[I_0^*-I_0^*-(d-2)/2]$ & $[III-III-(d-2)/4]$ & $[III-III^*-(d-4)/4]$ \\
    \hline
    $[III^*-III^*-(d-6)/4]$ & $[2I_0^*-(r-1)/2]$ & $[IV-IV-(d-2)/3]$\\
    \hline 
    $[IV-IV^*-(d-3)/3]$ & $[IV^*-IV^*-(d-4)/3]$ & $[II-II-(d-2)/6]$,\\
    \hline
    $[II-II^*-(d-6)/6]$ & $[II^*-II^*-(d-10)/6]$ & $[I_0^*-II-(d-4)/6]$,\\
    \hline 
    $[I_0^*-II^*-(d-8)/6]$& $[2IV-(r-1)/3$ & $[2IV^*-(r-2)/3]$ \\
    \hline
     $[2II-(r-1)/6]$ & $[2II^*-(r-5)/6]$ & \\
     \hline

\end{tabular}
    
\end{center} 
\end{enumerate}
\end{theorem}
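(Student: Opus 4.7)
The strategy mirrors the proof of Theorem \ref{potentiallygoodreductionproof}. Let $M/L^{unr}$ be the minimal extension over which $C_{L^{unr}}$ acquires stable reduction, furnished by Theorem \ref{theoremextension}. First I would invoke \cite[Theorem (2.4)]{delignemumford} (which identifies the stable reduction extension of $C$ with the good reduction extension of $Jac(C)$) together with Lemma \ref{lemmarootsofunity} applied to the CM abelian variety $Jac(C)/L$ to conclude that $n := [M : L^{unr}]$ divides $\mu'$. Since $p > 5$, the extension $M/L^{unr}$ is automatically tame by \cite[Proposition 4.1.2]{Liugenus2algorithm}, so \cite[Table 3.1]{Liugenus2algorithm} governs the possible reduction types of $C_{L^{unr}}/L^{unr}$.

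Because $C/L$ has simple CM Jacobian and does not have potentially good reduction, Theorem \ref{cmpotentiallygoodreduction} forces $Jac(C)/L$ to be purely additive with potentially good reduction, and so, by the paragraph preceding \cite[Proposition 2]{liu1993}, the special fiber of $\mathcal{C}^{st}$ is the union of two elliptic curves meeting at a single point. Liu's Table 3.1, restricted to this configuration, yields for each admissible tame degree $n$ a finite list of candidate reduction types. The proof then amounts to intersecting, for each $\mu' \in \{2,4,6,8,10,12\}$, the list of divisors of $\mu'$ with the degrees appearing in Liu's table, and keeping the types that survive the exclusion lemmas.

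The bulk of the case analysis then runs as follows. For $\mu' = 2$ or $10$ the divisibility allows $n \in \{1,2,5,10\}$, but in the two-elliptic-curves configuration only $n=2$ produces a type, namely $[I_0^*-I_0^*-(d-2)/2]$. For $\mu' = 4$ the degrees $n \in \{2,4\}$ produce the list in part (iv). For $\mu' = 8$, divisibility gives $n \in \{2,4,8\}$, contributing the additional degree-$8$ types $[2III-(r-1)/4]$ and $[2III^*-(r-3)/4]$. For $\mu'=6$ and $12$ one uses both parts of Lemma \ref{excludetypesII}: part (i) eliminates the many types (those of the form $[I_0-\cdot-\cdot]$, the parity-$(d-1)/4$ mixed-fiber types, and so on) whose existence would, after a suitable sub-extension, contradict purely additive reduction of the CM Jacobian, and part (ii) rules out the remaining degree-$6$ types with $2 \nmid v_L(J_2)$ that are incompatible with CM. For $\mu'=12$ one additionally invokes Theorem \ref{theoremlorenzini1990}: since $Jac(C)/L$ has purely additive potentially good reduction, $n \leq 10$, so $n \in \{1,2,3,4,6\}$ among the divisors of $12$, which is how the $n=12$ types would be excluded if they appeared.

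The main obstacle will be the bookkeeping in \cite[Table 3.1]{Liugenus2algorithm}: extracting for each tame degree $n$ the exact set of reduction types realized by the two-elliptic-curves stable fiber, and then matching the combined effect of Lemmas \ref{excludetypesII}(i) and (ii) to the theorem's lists --- in particular handling the "$2X$" labels (whose tame degrees are $2 \deg(X)$) so that they are admitted exactly under the $\mu'$ whose divisors include the corresponding doubled degree. Once this bookkeeping is in place, each of the six cases (i)--(v) of the theorem follows by the same pattern as in Theorem \ref{potentiallygoodreductionproof}.
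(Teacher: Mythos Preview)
Your overall plan matches the paper's: deduce $n:=[M:L^{unr}]\mid \mu'$ via Lemma \ref{lemmarootsofunity} and \cite[Theorem (2.4)]{delignemumford}, note the stable fibre is two elliptic curves meeting at a point, and then run through Liu's Tables 3.1 and 3.2, pruning with Lemma \ref{excludetypesII}. One ingredient you leave implicit is \cite[Corollaire 4.1]{liu1993}: in the two-elliptic-curves configuration $n$ must divide $8$ or $12$, and this (not merely the absence of entries in the table) is what the paper cites to discard $n=5,10$ when $\mu'=10$.

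There is one substantive divergence. For $\mu'=12$ you invoke Theorem \ref{theoremlorenzini1990} to force $n\le 10$ and thereby throw out $n=12$ entirely. The paper does \emph{not} do this here; it reserves Lorenzini's bound for the potentially-good case (Theorem \ref{potentiallygoodreductionproof}(vi)). Instead, when $n=12$ the paper applies Lemma \ref{excludetypesII}(ii) to deduce $2\nmid v_{L^{unr}}(J_2)$, after which \cite[Table 3.2]{Liugenus2algorithm} yields exactly $[2II-(r-1)/6]$ and $[2II^*-(r-5)/6]$ --- and these two types \emph{are} in the theorem's $\mu'=12$ list, contrary to your phrase ``the $n=12$ types would be excluded if they appeared.'' If Lorenzini's \cite[Proposition 2.7]{lorenzini1990} genuinely applies when the stable fibre of $C$ is singular, your route would prove something strictly sharper; but the paper's author did not use it here, so you should check that hypothesis carefully rather than assume it.

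Finally, your reading of Lemma \ref{excludetypesII}(ii) is inverted. It does not ``rule out degree-$6$ types with $2\nmid v_L(J_2)$''; it is a dichotomy: for $n\in\{6,12\}$, \emph{either} the type is one of the five listed $[II\text{-}\cdots]$ or $[I_0^*\text{-}II^{(*)}\text{-}\cdots]$ types, \emph{or} $2\nmid v_L(J_2)$, in which case Table 3.2 forces one of the ``$2X$'' types. This dichotomy is precisely how the paper handles $n=6$ for $\mu'=6$ and $n=12$ for $\mu'=12$.
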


\begin{proof}
Let $M/L^{unr}$ be the extension of minimal degree over which $C_{L^{unr}}/L^{unr}$ acquires stable reduction, which is provided by Theorem \ref{theoremextension}. By \cite[Theorem (2.4)]{delignemumford} and Lemma \ref{lemmarootsofunity} we find that $[M : L^{unr} ]$ divides $\mu'=|\mu(K)|$. Moreover, since $p>5$, the extension $M/L^{unr}$ is tame (see \cite[Proposition 4.1.2]{Liugenus2algorithm}). It follows from \cite[Proposition 3]{liu1993} that $C_M/M$ has stable reduction. Moreover, since the stable model of $C_M/M$ is assumed to be singular, $[M:L^{unr}]$ divides $8$ or $12$ by \cite[Corollaire 4.1]{liu1993} and the special fiber of the stable model of $C_M/M$ is a union of two elliptic curves meeting at one point (see the paragraph right before \cite[Proposition 2]{liu1993}). We will combine the above observations with Liu's algorithm \cite{Liugenus2algorithm} along with Lemma \ref{excludetypesII} to compute the special fiber of the minimal proper regular model.

{\it Proof of part $(i)$:} If $\mu'=2$ or $10$, then we see that $[M : L^{unr} ]$ divides $2$ or $10$. Since $[M:L^{unr}]$ also divides $8$ or $12$, we find that $[M : L^{unr} ]=1$ or $2$. Therefore, we find, using \cite[Table 3.1 and Table 3.2]{Liugenus2algorithm}, that $C_{L^{unr}}/L^{unr}$ has either reduction of type $[I_0^*-I_0^*-(d-2)/2]$, $[I_0-I_0-d]$, $[I_0-I_0^*-(d-1)/2]$, or $[2I_0-r]$. However, the last three types are excluded by Part $(i)$ of Lemma \ref{excludetypesII}.

{\it Proof of part $(ii)$:} If $\mu'=4$, then we see that $[M : L^{unr} ]$ divides $4$. If $[M : L^{unr} ]=1$ or $2$,then we find that the reduction types of $C_{L^{unr}}/L^{unr}$ are the types that appear in part $(i)$. Therefore, we assume from now on that $[M: L^{unr} ] = 4$. Using \cite[Table 3.1]{Liugenus2algorithm} and \cite[Table 3.2]{Liugenus2algorithm}, we find that $C_{L^{unr}}/L^{unr}$ can only have reduction of type $[I_0^*-I_0^*-(d-2)/2]$, $[III-III-(d-2)/4]$, $[III-III^*-(d-4)/4]$, $[III^*-III^*-(d-6)/4]$,  $[2I_0^*-(r-1)/2]$, $[I_0-III-(d-1)/4]$, $[I_0-III^*-(d-3)/4]$, $[I_0^*-III^*-(d-5)/4]$, $[I_0^*-III-(d-3)/4]$. However, the last four types are excluded by Part $(i)$ of Lemma \ref{excludetypesII}.

{\it Proof of part $(iii)$:} If $\mu'=6$, then we see that $[M : L^{unr} ]$ divides $6$. If $[M : L^{unr} ]=1$ or $2$, then we find that the reduction types of $C_{L^{unr}}/L^{unr}$ are the types that appear in part $(i)$. If $[M : L^{unr} ]=3$,  then, using \cite[Table 3.1]{Liugenus2algorithm}, we find that $C_{L^{unr}}/L^{unr}$ can only have reduction of type $[IV-IV-(d-2)/3]$, $[IV-IV^*-(d-3)/3]$, $[IV^*-IV^*-(d-4)/3]$, $[I_0-IV-(d-1)/3]$, or $[I_0-IV^*-(d-2)/3]$.  However, the last two types are excluded by Lemma \ref{excludetypesII}. On the other hand, if $[M : L^{unr} ] = 6$, then, Part $(ii)$ of Lemma \ref{excludetypesII} below shows that either $C_{L^{unr}}/L^{unr}$ has reduction type $[II-II-(d-2)/6]$, $[II-II^*-(d-6)/6]$, $[II^*-II^*-(d-10)/6]$, $[I_0^*-II-(d-4)/6]$,$[I_0^*-II^*-(d-8)/6]$, or we have that $2 \nmid v_L(J_2)$. Assume from now on that $2 \nmid v_L(J_2)$. Using \cite[Table 3.2]{Liugenus2algorithm}, we find that $C_{L^{unr}}/L^{unr}$ can only have reduction of type $[2IV-(r-1)/3]$ or $[2IV^*-(r-2)/3]$.

{\it Proof of part $(iv)$:} If $\mu'=8$, then we see that $[M : L^{unr} ]$ divides $8$. If $[M : L^{unr} ]$ divides $4$, then we find that the reduction types of $C_{L^{unr}}/L^{unr}$ are the types that appear in part $(ii)$. On the other hand, if $[M : L^{unr} ] = 8$, then, using \cite[Table 3.1]{Liugenus2algorithm} and \cite[Table 3.2]{Liugenus2algorithm}, we find that $C_{L^{unr}}/L^{unr}$ can only have reduction of type $[2III-(r-1)/4]$ or $[2III^*-(r-3)/4]$.

{\it Proof of part $(v)$:} If $\mu'=12$, then we see that $[M : L^{unr} ]$ divides $12$. If $[M : L^{unr} ]$ divides $4$ or $6$, then we find that the reduction types of $C_{L^{unr}}/L^{unr}$ are the types that appear in part $(ii)$ or in part $(iii)$. Assume now that $[M : L^{unr} ] = 12$. Part $(ii)$ of Lemma \ref{excludetypesII} shows that $2 \nmid v_L(J_2)$. Therefore, using  \cite[Table 3.2]{Liugenus2algorithm}, we find that $C_{L^{unr}}/L^{unr}$ can only have reduction of type  $[2II-(r-1)/6]$ or $[2II^*-(r-5)/6]$. This completes the proof of our theorem.
\end{proof}

\begin{remark}
 In Theorem \ref{genus2jacobiangcurvegoodreduction}, if a reduction type appears in the tables, then the reduction types corresponding to quadratic twists of the original curve also appear in the tables, as we explain in this remark. Keep the same assumptions and  notation as in Theorem \ref{genus2jacobiangcurvegoodreduction}. Let $L_d=L(\sqrt{d})$ be a quadratic extension of $L$ and let $\chi: \text{Gal} ( \widebar{L}/L)\rightarrow \{ \pm 1 \}$ with $\chi ( \sigma) = (\sqrt{d})^{\sigma}/\sqrt{d}$ be the associated quadratic character. Denote by $j$ the hyperelliptic involution of $C/L$. Consider the cocycle $\xi \in \text{H}^1( \text{Gal} ( \widebar{L}/L), \text{Aut}_{\widebar{L}}(C) ) $ given by $\xi(\sigma)=[j]$ if $\chi(\sigma)=-1$, and $\xi(\sigma)=[id]$ otherwise. Let $C^\xi/L$ be the twist of $C/L$ corresponding to the cocycle $\xi$. We claim that $Jac(C^{\xi})/L$ also has complex multiplication by $K$ over the field $L$. To justify this, note that the embedding $\iota : K \hookrightarrow \text{End}_L^0(Jac(C))$ induces an embedding $\iota_{\xi} : K \hookrightarrow \text{End}_L^0(Jac(C^{\xi}))$ (the proof of \cite[Lemma 2.2]{shnidmanquadraticrm} carries over in our case). We note that Sadek in \cite{sadekminimalregularmodelsandtwists} has computed the reduction type of $C^\xi/L$ based on the reduction type of $C/L$.
\end{remark}

We do not know whether all the types allowed by Theorem \ref{genus2jacobiangcurvegoodreduction} actually occur. We present some examples of reduction types of CM curves below.

\begin{example}(See \cite[Example 3.6.2]{gorenlauter} and \cite[Table 1]{vanwamelen})
Consider the hyperelliptic curve $C/\mathbb{Q}$ given by the following equation $$ \; y^2=-8x^6-64x^5+1120x^4+4760x^3-48400x^2+22627x-91839.$$ The Jacobian of this curve does not have complex multiplication over $\mathbb{Q}$ but it acquires complex multiplication after a finite extension of $\mathbb{Q}$.

Using SAGE \cite{sagemath} we see that the curve $C/\mathbb{Q}$ has bad reduction modulo $2,5,11,$ and $13$, it has reduction type $[I_0^*-I_0^*-0]$ modulo $11$, and it has reduction type $[VI]$ modulo $13$. Therefore, $C/\mathbb{Q}$ has potentially good reduction modulo $13$, and the potential stable reduction of $C/\mathbb{Q}$ modulo $11$ is the union of two elliptic curves intersecting transversely at a point. Let $N>1$ be a positive integer not divisible by $2,5,11,$ or $13$, and let $L=\mathbb{Q}(Jac(C))[N])$. Then $Jac(C)_L/L$ has good reduction, by \cite[Corollary 3]{st}, and it has all of its endomorphisms defined over $L$, by \cite[Theorem 2.4]{silverberg1992}. Therefore, $C_L/L$ has good reduction modulo primes of $L$ above $13$ and it has reduction of type $[I_0-I_0-2]$ modulo primes of $L$ above $11$.
\end{example}

\begin{example}(See \cite[Example 3.2]{gorenlauter} and \cite[Table 1]{vanwamelen}) 
Consider the hyperelliptic curve $C/\mathbb{Q}$ given by the following equation $$ \; y^2=x^5+1.$$ The curve $C/\mathbb{Q}$ has good reduction outside of $2,5$ and it has reduction type $[VII]$ modulo $5$. Moreover, the base extension $\text{Jac}(C)_{\mathbb{Q}(\zeta_5)}/\mathbb{Q}(\zeta_5)$ of the Jacobian of $C/\mathbb{Q}$ to $\mathbb{Q}(\zeta_5)$ has complex multiplication by $\mathbb{Q}(\zeta_5)$.
\end{example}

\section{Geometric component groups and torsion of CM abelian varieties}\label{sectioncomponentgroups}

In this section, we focus on component groups and torsion points of CM abelian varieties. We first prove Proposition \ref{componentgroups} below which provides a bound for the component group of CM abelian varieties and a list of the possible component groups of elliptic curves with complex multiplication. We then prove Proposition \ref{propositioncmandtorsion} using Proposition \ref{componentgroups}.

Let $R$ be a complete discrete valuation ring with fraction field $L$ of characteristic $0$ and perfect residue field $k_L$ of characteristic $p>0$, and let $A/K$ be an abelian variety. We denote by $\mathcal{A}/R$ the N\'eron model of $A/K$ (see \cite{neronmodelsbook} for the definition as well as the basic properties of N\'eron models). The special fiber $\mathcal{A}_{k_L}/k_L$ of $\mathcal{A}/R$ is a smooth commutative group scheme. We denote by $\mathcal{A}^0_{k_L}/k_L$ the connected component of the identity of $\mathcal{A}_{k_L}/k_L$. The finite \'etale group scheme defined by $\Phi:=\mathcal{A}_{k_L}/\mathcal{A}^0_{k_L}$ is called the component group of $\mathcal{A}/R$.

\begin{proposition}\label{componentgroups}
Let $R$ be a complete discrete valuation ring with fraction field $L$ of characteristic $0$ and finite residue field $k_L$ of characteristic $p$. Let $L^{unr}$ be the maximal unramified extension of $L$ and denote the residue field of $L^{unr}$ by $\widebar{k_L}$.
\begin{enumerate}
    \item Assume that $p \neq 2$ and that $K \neq \mathbb{Q}(i), \mathbb{Q}(\sqrt{-3})$. Let $E/L$ be an elliptic curve with complex multiplication by $K$ over $L$ such that  $j_E \neq 0, 1728$. Then $E/L$ has geometric component group $\Phi(\widebar{k_L})$ isomorphic (as an abelian group) to $(0)$ or $\mathbb{Z}/2\mathbb{Z} \times \mathbb{Z}/2\mathbb{Z}$.
    \item Let $A/L$ be an abelian variety with complex multiplication by a CM field $K$ over $L$. Then the geometric component group $\Phi(\widebar{k_L})$ of $A_{L^{unr}}/L^{unr}$  is killed by $|\mu(K)|$, where $\mu(K)$ is the group of roots of unity in $K$.
\end{enumerate}
\end{proposition}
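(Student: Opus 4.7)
For part (i), the plan is to apply Theorem \ref{fullendomorphism} directly. Under the hypotheses $p \neq 2$, $K \neq \mathbb{Q}(i), \mathbb{Q}(\sqrt{-3})$, and $j_E \neq 0, 1728$, the extra endomorphism-ring assumption appearing in Theorem \ref{fullendomorphism} is vacuous, and that theorem then forces the reduction type of $E/L$ to be either $\mathrm{I}_0$ or $\mathrm{I}_0^*$. The geometric component groups attached to these two Kodaira types are, respectively, the trivial group and $\mathbb{Z}/2\mathbb{Z} \times \mathbb{Z}/2\mathbb{Z}$, as read off from the minimal proper regular model in Kodaira's classification; this is the content of part (i).

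For part (ii), the plan is to combine Lemma \ref{lemmarootsofunity} with the classical bound on the exponent of the component group of an abelian variety with potential good reduction. Lemma \ref{lemmarootsofunity} produces a finite Galois extension $M/L^{\mathrm{unr}}$ of degree $e$ dividing $\mu' := |\mu(K)|$ such that $A_M/M$ has good reduction. Let $\mathcal{A}/R^{\mathrm{unr}}$ and $\mathcal{B}/R_M$ denote the N\'eron models of $A_{L^{\mathrm{unr}}}$ and $A_M$, respectively; the latter is an abelian scheme. By the N\'eron mapping property there is a canonical morphism $\mathcal{A}_{R_M} \to \mathcal{B}$ that is the identity on the common generic fiber, and since the two rings share the residue field $\widebar{k_L}$, one can compare the component group $\Phi(\widebar{k_L})$ of $\mathcal{A}$ with the trivial component group of $\mathcal{B}$ via Galois descent along $\mathrm{Gal}(M/L^{\mathrm{unr}})$.

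The core input is then the standard annihilation statement: $e \cdot \Phi(\widebar{k_L}) = 0$. In the tame case $p \nmid e$, this is essentially a short group-cohomology argument using that $H^i(\mathrm{Gal}(M/L^{\mathrm{unr}}), -)$ is killed by $e$, together with the identification of $\Phi$ as an appropriate cohomology quotient; in the general case it can be extracted from the inertia-action analysis of \cite[\S 7.4]{neronmodelsbook} and the refined filtration on $\mathcal{A}$ due to Edixhoven. Applied here, the divisibility $e \mid \mu'$ immediately yields $\mu' \cdot \Phi(\widebar{k_L}) = 0$, as desired. The main obstacle is therefore technical: to ensure that the annihilation result is valid in the wildly ramified setting (i.e., when $p$ divides $\mu'$, which forces $p$ to be small relative to the few admissible values of $\mu'$); once this is handled, the rest of the argument is formal.
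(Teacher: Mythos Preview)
Your proposal is correct and follows essentially the same approach as the paper: part~(i) via Theorem~\ref{fullendomorphism} plus the standard table of component groups for each Kodaira type, and part~(ii) via Lemma~\ref{lemmarootsofunity} combined with the annihilation of $\Phi$ by the degree of the extension over which good reduction is acquired. For the latter, the paper dispatches your ``main obstacle'' (the possibly wild case) simply by citing McCallum \cite{mccallumthecomponentgroupofaneronmodel} and Edixhoven--Liu--Lorenzini \cite[Theorem~1]{ell1996}, which give $[M:L^{\mathrm{unr}}]\cdot\Phi(\widebar{k_L})=0$ unconditionally, so no separate tame/wild analysis or appeal to \cite[\S7.4]{neronmodelsbook} is needed.
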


\begin{proof}
{\it Proof of part $(i)$:} Theorem \ref{fullendomorphism} implies that $E/L$ can only have good reduction or reduction of Kodaira type I$_0^*$. Therefore, part $(i)$ follows from a simple application of Tate's algorithm (see \cite[Page 365]{silverman2}).

{\it Proof of part $(ii)$:} Let $M/L^{unr}$ be the extension of minimal degree over which $A_{L^{unr}}/L^{unr}$ acquires semi-stable reduction. It follows from Lemma \ref{lemmarootsofunity} that the degree $[M:L^{unr}]$ divides $|\mu(K)|$. On the other hand, work of McCallum \cite{mccallumthecomponentgroupofaneronmodel} and Edixhoven, Liu, and Lorenzini \cite[Theorem 1]{ell1996} tells us that $[M:L^{unr}]$ kills $\Phi(\widebar{k_L})$. Therefore, since $[M:L^{unr}]$ divides $|\mu(K)|$, we find that $\Phi(\widebar{k_L})$ is killed by $|\mu(K)|$. This proves our proposition.
\end{proof}

\begin{remark}
 Let $R$ be a complete discrete valuation ring with fraction field $L$ of characteristic $0$ and finite residue field $k_L$ of characteristic $p>0$. Lorenzini in \cite[Corollary 3.25]{lorenzini1993} has provided a list of the possible prime-to-$p$ parts of the geometric component group of abelian varieties defined over $L$ that have purely additive and potentially good reduction. When $p>5$ using Theorem \ref{genus2jacobiangcurvegoodreduction}, together with \cite[Section 8]{Liugenus2algorithm}, we find that the cases where the group is $\mathbb{Z}/4\mathbb{Z}$ or $\mathbb{Z}/2\mathbb{Z} \times \mathbb{Z}/4\mathbb{Z}$ cannot occur among Jacobian surfaces with complex multiplication defined over the base field. We note that by Theorem \ref{cmpotentiallygoodreduction} Jacobian surfaces with complex multiplication have either purely additive or good reduction.
\end{remark}

We now proceed to the proof of Proposition \ref{propositioncmandtorsion}. For every positive integer $m$ we let $\gamma_p(m)=\lfloor \log_p(\frac{pm}{p-1}) \rfloor.$

\begin{proposition}\label{torsiononcmbadreduction}
Let $R$ be a complete discrete valuation ring with fraction field $L$ of characteristic $0$ and finite residue field $k_L$ of characteristic $p>0$. Denote by $e$ the absolute ramification index of $L$. Let $A/L$ be an abelian variety with complex multiplication by a CM field $K$ over $L$ and assume that $A/L$ does not have good reduction. Then 
$$|A(L)_{tors}| \leq |\mu(K)| \cdot p^{2g \gamma_p(e|\mu(K)|)}.$$
\end{proposition}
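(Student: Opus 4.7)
The plan is to split $|A(L)_{\text{tors}}|$ into its $p$-primary and prime-to-$p$ parts and bound each separately, exploiting the CM hypothesis throughout. Since $A/L$ has CM by $K$ over $L$ it is isotypic, and the bad reduction assumption combined with Theorem \ref{cmpotentiallygoodreduction} forces $A/L$ to have purely additive reduction. Lemma \ref{lemmarootsofunity} then produces a finite extension $M/L^{\text{unr}}$ with $[M:L^{\text{unr}}]=n$ dividing $\mu'=|\mu(K)|$ such that $A_M/M$ has good reduction, and since $A/L$ has bad reduction we have $n\geq 2$. The next step is to descend $M$ to a totally ramified extension $M_0/L$ of degree $n$ with $M_0\cdot L^{\text{unr}}=M$, obtained by adjoining a suitable uniformizer root; then $A_{M_0}/M_0$ still has good reduction (because $M/M_0$ is unramified), and the absolute ramification index of $M_0$ equals $en\leq e\mu'$.

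For the $p$-primary torsion, the good reduction of $A_{M_0}/M_0$ implies that the prime-to-reduction part of $A(M_0)_{\text{tors},p}$ is captured by the formal group $\widehat{\mathcal{A}}(\mathfrak{m}_{M_0})$ of the N\'eron model at the origin. The standard formal-group torsion estimate (in the form used by Clark and Xarles in \cite{cx}) then yields
\[
|A(M_0)_{\text{tors},p}|\leq p^{2g\gamma_p(e_{M_0})}\leq p^{2g\gamma_p(e\mu')},
\]
and the inclusion $A(L)\subseteq A(M_0)$ gives the same bound for $|A(L)_{\text{tors},p}|$.

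For the prime-to-$p$ part, the purely additive reduction of $A/L$ means that $\mathcal{A}^0_{k_L}$ is unipotent, so $\mathcal{A}^0(k_L)$ is a $p$-group; consequently the specialization map injects $A(L)_{\text{tors},p'}$ into the component group $\Phi(k_L)$. By Proposition \ref{componentgroups}(ii) the geometric component group $\Phi(\widebar{k_L})$ is killed by $\mu'$. To upgrade this exponent bound to the order bound $|\Phi(k_L)|\leq\mu'$, the plan is to use that CM endows $\Phi$ with a natural $\mathcal{O}_K$-module structure and that the tame inertia character factors through $\mu(K)\subset\mathcal{O}_K^\times$; combining this with the Frobenius action on $k_L$ via the reciprocity law for the Hecke character associated to $A/L$ should pin down $\Phi(k_L)$. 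Multiplying the two bounds then produces $|A(L)_{\text{tors}}|\leq\mu'\cdot p^{2g\gamma_p(e\mu')}$.

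The main obstacle is the final step for the prime-to-$p$ torsion: Proposition \ref{componentgroups}(ii) supplies only an exponent bound on $\Phi(\widebar{k_L})$, whereas we need an order bound on $\Phi(k_L)$ that genuinely uses the CM structure. Carrying this through requires an explicit description of $\Phi(k_L)$ in terms of the image of inertia inside $\mu(K)$ and the Frobenius, with Proposition \ref{componentgroups}(i) serving as a template for how the bound specializes in the elliptic-curve case.
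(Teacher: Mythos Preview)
Your overall strategy coincides with the paper's: both begin with Theorem~\ref{cmpotentiallygoodreduction} to force purely additive reduction, and both then appeal to the Clark--Xarles framework. The paper's proof is given entirely by citation (``the proof of \cite[Part~(iii) of Main Theorem]{cx} carries over verbatim'', with Proposition~\ref{componentgroups} substituted for \cite[Corollary~3.8]{cx}), so your attempt to unpack that argument into a $p$-primary formal-group estimate over a totally ramified extension $M_0$ plus a prime-to-$p$ component-group estimate is a natural reconstruction of what is intended.

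However, the obstacle you flag is genuine and your sketched fix does not close it. Proposition~\ref{componentgroups}(ii) only says $\Phi(\widebar{k_L})$ is \emph{killed} by $\mu'$; the order bound $|\Phi(k_L)|\le\mu'$ you aim for is false in general (Proposition~\ref{componentgroups}(i) itself allows $\Phi\cong(\mathbb{Z}/2\mathbb{Z})^2$ when $\mu'=2$). Worse, the intermediate inequality $|A(L)_{\mathrm{tors},p'}|\le\mu'$ underlying your decomposition already fails for elliptic curves. Take $K=\mathbb{Q}(\sqrt{-7})$, so $\mu'=2$: since $2$ splits in $\mathcal{O}_K$ one has $(\mathcal{O}_K/2)^\times=1$, hence Galois acts trivially on $E[2]$ for any $E$ with CM by $\mathcal{O}_K$ over $L$, giving $|E(L)[2]|=4>2$. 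Choosing such an $E$ with $I_0^*$ reduction at a prime with $p\ge 5$ and $e$ small enough that $\gamma_p(e\mu')=0$ (for instance the curve with $j=-3375$ over $\mathbb{Q}_7(\sqrt{-7})$, where $e=2$ and $\gamma_7(4)=0$) yields $|E(L)_{\mathrm{tors}}|\ge 4$ against a claimed bound of~$2$. The $\mathcal{O}_K$-module and Hecke-character structure you invoke does not help here, since $E[2]\cong\mathcal{O}_K/2$ is already a single cyclic $\mathcal{O}_K$-module of order~$4$.

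So your splitting into prime-to-$p$ and $p$-primary parts is too coarse to deliver the stated inequality. Since the paper's own proof provides no details beyond the reference to \cite{cx}, the only way to see whether and how this point is handled is to consult the Clark--Xarles argument directly and check precisely what their Corollary~3.8 asserts and how it enters the proof of their Part~(iii).
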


\begin{proof}
By Theorem \ref{cmpotentiallygoodreduction} we find that $A/K$ has purely additive reduction. Then the proof of \cite[Part (iii) of Main Theorem]{cx} carries over verbatim in our case. The only extra input is that instead of using \cite[Corollary 3.8]{cx} in the proof of \cite[Part (iii) of Main Theorem]{cx} is that we can use the more precise bound provided by Proposition \ref{componentgroups}.
\end{proof}

Combining the previous proposition with \cite[Main Theorem]{cx} we prove Proposition \ref{propositioncmandtorsion} below, which improves slightly \cite[Main Theorem]{cx} in the case where the abelian variety has complex multiplication.

\begin{proof}[Proof of Proposition \ref{propositioncmandtorsion}]
If $A/L$ does not have good reduction, then by Proposition \ref{torsiononcmbadreduction} we have that $$|A(L)_{tors}| \leq |\mu(K)| \cdot p^{2g \gamma_p(e|\mu(K)|)}.$$ On the other hand, if $A/K$ has good reduction then, since the toric and unipotent ranks are zero i.e., $\mu=\alpha=0$ in their notation, while the abelian rank is $g$, i.e., $\beta=g$ in their notation, \cite[Part (ii) Main Theorem]{cx} implies that  $$|A(L)_{tors}| \leq  \lfloor (1+\sqrt{q})^2 \rfloor^g \cdot p^{ 2g \gamma_p(e)} \}.$$
\end{proof}

\begin{remark}
    In the literature there exist global bounds on the torsion of CM abelian varieties over number fields (see \cite{gaudronremond} and \cite{silverberg1988}), which rely on the main theorem of complex multiplication and class field theory. On the other hand, our result is local, i.e., it only depends on local invariants, it is much more elementary, and it is of most interest when $A/L$ has bad reduction while $|\mu(K)|$ and $e$ are small.
\end{remark}

\bibliographystyle{plain}
\bibliography{bibliography.bib}

\end{document}